\theoremstyle{plain}
\newtheorem{Thm}{Theorem}
\newtheorem{Cor}{Corollary}
\newtheorem{Prop}{Proposition}
\theoremstyle{definition}
\theoremstyle{remark}
\numberwithin{equation}{section}
\title{{\bf Indices of the iterates of \({\Bbb R}^3\)-homeomorphisms at fixed
points which are isolated invariant sets}}
\author{Patrice Le Calvez, Francisco R. Ruiz del Portal and José M. Salazar \thanks{ The authors have been supported by MEC,
MTM 2006-0825, the first author has also been supported by ANR (Symplexe, ANR-06-BLAN-0030-01)\newline 2000 {\em Mathematics Subject
Classification}: 37C25, 37B30, 54H25.
\newline {\em Keywords and phrases.} Fixed point index, Conley index, entropy, filtration pairs.}}
\begin{document}

\maketitle

\begin{abstract}
Let \(U \subset {\mathbb R}^3\) be an open set and \(f:U
\rightarrow f(U) \subset {\mathbb R}^3\) be a homeomorphism.  Let
\(p \in U\) be a fixed point. It is known that, if  \(\{p\}\) is
not an isolated invariant set, the sequence of the fixed point
indices of the iterates of \(f\) at \(p\), \((i(f^n,p))_{n\geq
1}\), is, in general, unbounded. The main goal of this paper is to
show that when \(\{p\}\) is an isolated invariant set, the
sequence \((i(f^n,p))_{n\geq 1}\) is periodic. Conversely, we show
that for any periodic sequence of integers \((I_n)_{n \geq1}\) satisfying Dold's necessary congruences, there exists an
orientation preserving homeomorphism such that \(i(f^n,p)=I_n\) for every \(n\geq 1\). Finally we
also present an application to the study of the local structure of
the stable/unstable sets at \(p\).
\end{abstract}

\maketitle

\centerline{{\bf 1. Introduction.}}

\medskip

\medskip

Let \(U \subset {\mathbb R}^m\) be an open set and \(f:U
\rightarrow f(U) \subset {\mathbb R}^m\) be a continuous map. Let
\(p \in U\) be a fixed point of $f$ that is an isolated fixed
point of $f^n$ for every $n\geq 1$.  Since Dold \cite{Do}, it is
known that the sequence \((i(f^n, p))_{n\geq 1}\) of Lefschetz
indices must satisfy some rules, called Dold's congruences. Shub
and Sullivan proved that for \(C^1\)-maps the sequence is bounded.
Chow, Mallet-Paret and Yorke \cite{CMPY} gave bounds about the
form of the sequence of indices in terms of the spectrum of the
derivative \(Df(p)\). Babenko and Bogatyi \cite{BB} proved that
these bounds are sharp in dimension 2 and in a more recent paper
\cite{GN} Graff and Nowak-Przygodzki have proved  that for \(n=3\)
and \(C^1\)-maps, the sequence of fixed point indices follows one
among exactly seven different periodic patterns.

Suppose now that $f$ is a homeomorphism. For \(m=2\), see
\cite{LY}, \cite{LY1},  \cite{L} and \cite{RS},  it is known that
the sequence of indices \((i(f^n, p))_{n \geq 1}\) is periodic
with a very particular behavior. This sequence contains important
dynamical information. For $m\geq 3$,  the behavior is completely
different from that of diffeomorphisms. If an isolated fixed point
\(p\) is not isolated as invariant set, the sequence of indices of
the iterates in general is not bounded, even if the fixed point is
Lyapunov stable. More precisely, in \cite{RS1} it is proved that
any sequence satisfying Dold's congruences is realized as the
sequence of fixed point indices of the iterates of a \({\mathbb
R}^m\)-homeomorphism at an isolated and stable fixed point (see
also \cite{RS2}).

What happens if  \(p\) is an isolated invariant set ? There are
examples, for $m=8$, where the sequence \((i(f^n, p))_{n\geq 1}\)
is unbounded (see \cite{P}). The main result of the paper asserts
that such a situation cannot occur if $m=3$, more precisely:

\begin{Thm}
Let $U \subset {\mathbb R}^3$ be an open set and \(f:U \subset
{\mathbb R}^3 \rightarrow f(U) \subset {\mathbb R}^3\) be a
homeomorphism. Let $p$ be a fixed point of $f$ such that \(\{p\}\)
is an isolated invariant set. Then the sequence
\((i(f^n,p))_{n\geq 1}\) is periodic.
\end{Thm}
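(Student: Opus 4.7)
The strategy is to pass from the local dynamics at $p$ to a finite-dimensional linear-algebraic problem via Conley index theory, and then to use the three-dimensional hypothesis to force periodicity of the relevant trace sequences.

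\emph{First}, because $\{p\}$ is an isolated invariant set, one can construct a filtration pair $(N,L)$ for $p$ in the sense of Franks and Richeson, with $N$ and $L$ compact polyhedra in $U \subset \mathbb R^3$. The pointed quotient $N/L$ then carries an induced Conley-index endomorphism $\bar f: N/L \to N/L$, well-defined up to a suitable equivalence (for example in Szymczak's category), and standard Conley--Lefschetz arguments identify the fixed point index of $f^n$ at $p$ with the Lefschetz number of $\bar f^n$ acting on reduced rational singular homology:
\[ i(f^n,p) \;=\; \sum_{k}(-1)^k\,\operatorname{tr}\!\bigl(\bar f^n_{*,k}: \tilde H_k(N/L;\mathbb Q) \to \tilde H_k(N/L;\mathbb Q)\bigr). \]
Since $(N,L)$ is a compact polyhedral pair embedded in $\mathbb R^3$, each $\tilde H_k(N/L;\mathbb Q)$ is finite dimensional and nonzero for only finitely many $k$ (concentrated in $k\leq 3$). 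So the periodicity of $\bigl(i(f^n,p)\bigr)_{n\geq 1}$ is reduced to the periodicity of each trace sequence $\bigl(\operatorname{tr}(\bar f^n_{*,k})\bigr)_{n\geq 1}$.

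\emph{Second}, for an endomorphism $A$ of a finite-dimensional $\mathbb Q$-vector space, the sequence $(\operatorname{tr}(A^n))_n$ is periodic if and only if every nonzero eigenvalue of $A$ is a root of unity. The core of the proof therefore reduces to showing that every nonzero eigenvalue of every $\bar f_{*,k}$ is a root of unity. Two kinds of input seem to be required. A \emph{dynamical} one rules out eigenvalues of modulus $\neq 1$: an expanding eigenvalue would force positive entropy and, via a horseshoe or a Smale-type argument near $p$, would produce infinitely many periodic orbits accumulating on $p$, contradicting the assumption that $\{p\}$ is an isolated invariant set. A \emph{three-dimensional topological} one rules out eigenvalues of infinite order on the unit circle, by using the fact that embeddability of $N$ in $\mathbb R^3$ rigidly constrains the isomorphism types of $\tilde H_*(N/L;\mathbb Q)$ and the automorphisms of them that can be realized by a self-homeomorphism of $N$. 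A final appeal to Dold's congruences should then upgrade eventual periodicity of the resulting sequences to strict periodicity starting at $n=1$.

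\emph{Main obstacle.} The second step is the crux, and it is exactly where the dimension hypothesis must be exploited: in dimension $m=8$ the example of \cite{P} shows that the conclusion fails, so the proof must make essential use of ambient three-dimensionality. I expect the hard work to be a classification, up to shift equivalence, of the Conley indices that can occur for an isolated fixed point of an $\mathbb R^3$-homeomorphism which is an isolated invariant set, together with an entropy-type argument controlling the spectrum of the induced map on low-degree homology. Once such rigidity is in hand, assembling the alternating sum over $k$ gives periodicity of $\bigl(i(f^n,p)\bigr)_{n\geq 1}$ directly.
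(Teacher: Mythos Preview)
Your setup is right: a simplicial filtration pair $(N,L)$, the induced map $\bar f$ on $N_L$, and the formula $i(f^n,p)=\sum_k(-1)^k\operatorname{tr}(\bar f^n_{*k})$ reduce the problem to showing that every nonzero eigenvalue of each $\bar f_{*k}$ is a root of unity. But the division of labor you propose in the second step is not how the argument goes, and the two mechanisms you sketch do not close the gap.

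\medskip
\textbf{The dynamical step.} The contradiction you invoke (``an expanding eigenvalue forces a horseshoe, hence periodic orbits accumulating on $p$, contradicting isolation'') is neither available nor needed. Positive entropy does not produce a horseshoe for a merely continuous map, and in any case the relevant map is $\bar f$ on the quotient $N_L$, not $f$ near $p$. The correct argument is cleaner and goes the other way: since $\Omega(\bar f)\subset\{[L]\}\cup\{p\}$ is finite, $h_{\rm top}(\bar f)=h_{\rm top}(\bar f|_{\Omega(\bar f)})=0$; then Manning's theorem (valid on compact ANRs) gives $\mathrm{sr}(\bar f_{*1})\le 1$ directly. No horseshoe, no periodic orbits.

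\medskip
\textbf{The missing duality step.} The real gap is that Manning's theorem controls only $H_1$. You still need $\mathrm{sr}(\bar f_{*2})\le 1$, and this is exactly where dimension three enters---not through any rigidity or classification of possible Conley indices, but through a duality trick. One builds a second filtration pair $(N,L')$ for $f^{-1}$, forms the wedge $M=N_L\vee N_{L'}$ with the induced map $\bar g=\bar f\vee\overline{f^{-1}}$, and uses the odd-dimensional identity $i(f^{-n},p)=-i(f^n,p)$ (for $f$ orientation preserving) to conclude $\Lambda(\bar g^n)=1$ for all $n$. After the explicit computation of $H_0$ and $H_3$ of $N_L$ (via Alexander duality), this forces $\operatorname{tr}(\bar g_{*1}^n)=\operatorname{tr}(\bar g_{*2}^n)$ for all $n$, hence $\bar g_{*1}$ and $\bar g_{*2}$ have the same nonzero spectrum. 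Manning applied to $\bar g$ then bounds the $H_2$ spectrum as well.

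\medskip
\textbf{The algebraic step.} Once every eigenvalue has modulus $\le 1$, you do not need a separate ``topological'' argument to exclude irrational rotations. The maps $\bar f_{*k}$ are endomorphisms of finitely generated $\mathbb Z$-modules, so a Kronecker-type lemma (an integral endomorphism with all eigenvalues in $\overline{\mathbb D}$ has only roots of unity as nonzero eigenvalues) finishes the job. The appeal to Dold's congruences at the end is unnecessary: the trace sequences are then genuinely periodic from $n=1$.
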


From   \cite{BB} (see  also \cite{JM}), one knows that a bounded
sequence that satisfies Dold's congruences is periodic. Therefore
it would be sufficient to prove that the sequence is bounded in the main
theorem. In fact we will prove directly the periodicity. If $f:
M\to M$ is a continuous map on a $m$-dimensional compact manifold
$M$, denote by $f_{*i}$ the induced map on the singular homology
group $H_i(M,{\mathbb Z})$. A classical result of Manning \cite{M}
asserts that ${\rm h}_{\rm top}(f)\geq \log({\rm sr}(f_{*1}))$,
where ${\rm h}_{\rm top}(f)$ is the topological entropy of $f$ and
${\rm sr}(f_{*1})$  the spectral radius of $f_{*1}$. As explained
by Manning, this implies by Poincar\'e duality that if $f$ is a
homeomorphism of a $3$-dimensional compact manifold, one has ${\rm
h}_{\rm top}(f)\geq\sup_{0\leq i\leq 3} \log({\rm sr}(f_{*i}))$.
The inequality ${\rm h}_{\rm top}(f)\geq\sup_{0\leq i\leq n}
\log({\rm sr}(f_i^*))$ is  known to be true if $f$ is $C^{\infty}$
(see \cite {Y}) but fails to be true is $f$ is only continuous~:
what is constructed in \cite{P} is an example of a homeomorphism
on a $8$-dimensional manifold where the previous inequality does
not occur. As we will see later Manning's result will be the
key-point of the proof of the main result. We will need to apply
it, not on a manifold but on an absolute neighborhood retract ANR
constructed via a filtration pair and we will need to use
Lefschetz-Dold's formula to get a duality result. In fact we will
prove the following more precise result.

\begin{Thm}
Let $U \subset {\mathbb R}^3$ be an open set and \(f:U \subset
{\mathbb R}^3 \rightarrow f(U) \subset {\mathbb R}^3\) be a
homeomorphism. Let $S$ be a connected compact isolated invariant
set such that $h_{\rm top}(f\vert_S)=0$. Then the sequence
\((i(f^n,S))_{n\geq 1}\) is periodic.
\end{Thm}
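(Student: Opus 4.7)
The plan is to prove that the sequence $(i(f^n,S))_{n\geq 1}$ is in fact bounded; by the Babenko--Bogatyi theorem recalled in the introduction this automatically upgrades to periodicity through Dold's congruences. The bound will come from expressing $i(f^n,S)$ as a Lefschetz number on the Conley index of $S$, combined with Manning's entropy inequality and a Poincar\'e--Lefschetz-type duality in $\mathbb R^3$.

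The first step is to associate to $S$ a filtration pair $(N,L)$ with $N\subset U$ a compact ANR in $\mathbb R^3$ and $L\subset N$ the exit set, so that $N\setminus L$ is an isolating neighborhood of $S$. The quotient $X=N/L$ is then a pointed compact ANR whose rational singular homology is finitely generated and concentrated in degrees $0\le i\le 3$, and $f$ induces a continuous self-map $\bar f\colon X\to X$ fixing the basepoint $\ast=[L]$. The Lefschetz--Dold fixed-point formula yields
\[
i(f^n,S)=\sum_{i\ge 0}(-1)^i\,\mathrm{tr}\bigl(\bar f^n_{\ast i}\colon\widetilde H_i(X;\mathbb Q)\to\widetilde H_i(X;\mathbb Q)\bigr),
\]
so boundedness of $i(f^n,S)$ reduces to a control of the spectral radii $\mathrm{sr}(\bar f_{\ast i})$.

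Next I would observe that a $\bar f$-orbit avoiding $\ast$ in both forward and backward time must correspond to an $f$-orbit contained in $N\setminus L$, hence contained in the maximal invariant set $S$; this should give $h_{\mathrm{top}}(\bar f)\le h_{\mathrm{top}}(f|_S)=0$. Manning's inequality, applied (in its ANR version) to $\bar f$ on $X$, then forces $\mathrm{sr}(\bar f_{\ast 1})\le 1$. To treat the degrees $i=2,3$ I would invoke a Poincar\'e--Lefschetz-type duality for the pair $(N,L)$ embedded in $\mathbb R^3$: Alexander duality in $S^3$ together with the Lefschetz--Dold coincidence formula should identify $\bar f_{\ast i}$ with (the transpose of) an operator conjugate to $\bar f_{\ast (3-i)}$ or to its analogue for $\bar f^{-1}$, yielding $\mathrm{sr}(\bar f_{\ast i})\le 1$ in every degree. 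Since each $\bar f_{\ast i}$ is an endomorphism of a finitely generated abelian group with all eigenvalues of modulus $\le 1$, Kronecker's theorem forces these eigenvalues to be roots of unity, so $n\mapsto \mathrm{tr}(\bar f^n_{\ast i})$ is periodic. Summing over $i$ gives the periodicity of $(i(f^n,S))_{n\ge 1}$.

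The main obstacle will be the duality argument: $N$ is in general only an ANR, not a manifold, so classical Poincar\'e duality does not apply directly, and the authors' announcement that they use the Lefschetz--Dold formula ``to get a duality result'' signals that the substitute for Poincar\'e duality in the filtration-pair setting is the nontrivial technical core of the paper. Secondary difficulties are a careful verification of Manning's inequality for non-manifold compact ANRs and a precise version of the entropy comparison $h_{\mathrm{top}}(\bar f)\le h_{\mathrm{top}}(f|_S)$ despite the collapse of the exit set to a point.
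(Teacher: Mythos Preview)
Your overall architecture matches the paper: filtration pair, Lefschetz--Dold formula on the quotient ANR, Manning's inequality to bound $\mathrm{sr}(\bar f_{*1})$, then a duality step for $\bar f_{*2}$, and finally Kronecker to get roots of unity. Two points deserve comment.

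First, the contributions from $H_0$ and $H_3$ are not handled by the duality step; the paper disposes of them beforehand by constructing the filtration pair carefully (their Proposition~3). They arrange that $N$ is connected and that no bounded component of $\mathbb R^3\setminus L$ lies in $N$ (unless $S$ is a repeller), and then use Alexander duality only to compute $H_3(N_L;\mathbb Z)$, obtaining $H_3=0$ in the generic case and a one-dimensional, explicitly understood $H_3$ in the repeller case. This is where Alexander duality actually enters, not in the $H_1$/$H_2$ comparison.

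Second, and this is the substantive divergence, the paper does \emph{not} prove a Poincar\'e--Lefschetz or Alexander-type duality between $\bar f_{*1}$ and $\bar f_{*2}$. Instead it exploits the index identity $i(f^{-n},S)=-i(f^n,S)$ (valid for orientation-preserving homeomorphisms in odd dimension) as a substitute. Concretely: build a second filtration pair $(N',L')$ for $f^{-1}$, form the wedge $M=N_L\vee N'_{L'}$, and let $\bar g=\bar f\vee\overline{f^{-1}}$. The index identity forces $\Lambda(\bar g^n)=1$ for all $n$, which after controlling $H_0$ and $H_3$ yields $\mathrm{tr}(\bar g_{*1}^n)=\mathrm{tr}(\bar g_{*2}^n)$, hence $\bar g_{*1}$ and $\bar g_{*2}$ share the same nonzero spectrum. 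Manning bounds $\mathrm{sr}(\bar g_{*1})$, so $\mathrm{sr}(\bar g_{*2})\le 1$ and in particular $\mathrm{sr}(\bar f_{*2})\le 1$. In the orientation-reversing case the identity holds only for even $n$, which still suffices to match spectra up to sign. So the ``duality'' you anticipated being the crux is replaced by this wedge-with-the-inverse trick; your suspicion that a direct Poincar\'e--Lefschetz argument on a non-manifold ANR would be delicate is exactly why the authors avoid it.
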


We will denote by ${\rm cl}(Y)$, ${\rm int} (Y)$ and $\partial Y$
the closure, the interior and the frontier of a subset $Y$ of a
topological space $X$.

\pagebreak

{\bf 1.1. Lefschetz index}

\medskip

The reader is referred to \cite{Br2}, \cite{Do} and \cite{Nu} for
information about the Lefschetz index theory. Let us denote by
$S^{m-1}$ the unit sphere of the euclidian space ${\mathbb R}^m$.
Let $U\subset {\mathbb R}^m$ be a neighborhood of $0$ and $f:U\to
f(U)$ be a continuous map having an isolated fixed point at $0$.
If $\varepsilon>0$ is small enough, the map
$$z\mapsto\frac{ \varepsilon z-f(\varepsilon z)}{\Vert \varepsilon z-f(\varepsilon z)\Vert}$$
is well defined on  $S^{m-1}$ and its degree does not depend on
$\varepsilon$, it is the {\it Lefschetz index} $i(f,0)$. For
example,  if $f$ is differentiable at $0$ and $1$ is not an
eigenvalue of $Df(0)$, this number is equal to $(-1)^l$, where $l$
is the number of real eigenvalues larger than $1$.  Recall the
following facts in the case where $f$ is a homeomorphism~:

\begin{itemize}

\item[-] if $f$ preserves the orientation, then $i(f^{-1},0)=i(f,0)$ if $m$ is even and $i(f^{-1},0)=-i(f,0)$ if $m$ is odd;

\item[-]  if $f$ reverses the orientation, then $i(f^{-1},0)=-i(f,0)$ if $m$ is even and $i(f^{-1},0)=i(f,0)$ if $m$ is odd.

\end{itemize}
Using charts one can define similarly the Lefschetz index of a
continuous map on a manifold at an isolated fixed point $x$. Such
a definition has been extended by Dold to the case where $f$ is
defined on an ANR.  Among the properties of the index, just recall
that if $x$ is an attracting fixed point (i.e. there exists a
compact neighborhood $V$ of $x$ satisfying $f(V)\subset V$ and
$\bigcap_{n\geq 0}f^n(V)=\{x\}$) then one has $i(f,x)=1$. If
$S\subset X$ is a compact invariant set (i.e. $f(S)=S$) and if
there is a neighborhood $V$ of $S$ whose fixed points are all
contained in $S$, one may define the index $i(f,S)\in {\mathbb
Z}$. In the case where there are finitely many fixed points in
$S$, it is equal to the sum of all Lefschetz indices of fixed
points that are in $S$. A particular case is the case where $X$ is
compact and $S=X$. The number $i(f,X)$ is called the {\it Lefschetz
number} and denotes by $\Lambda(f)$. It is related to the action
on the singular homology groups by the Lefschetz-Dold formula
$$\Lambda(f)=\sum_{i=0}^n (-1)^{i}{\rm tr}(f_{*i}),$$
where $f_{*i}$ is the morphism induced by $f$ on $H_i(X,{\mathbb
Z})$ and $n$ the biggest integer such that $H_i(X,{\mathbb
Z})\not=0$.

\medskip

\medskip

{\bf 1.2. Discrete Conley index, filtration pairs}

\medskip
Let \(U \subset M\) be an open set of a $m$-dimensional manifold
$M$ and \(f:U \rightarrow f(U) \subset M$ be a homeomorphism. A
compact invariant set \(S\) is {\em isolated with respect to}
\(f\) if there exists a compact neighborhood \(N\) of \(S\) such
that $\bigcap_{k\in{\mathbb Z}}f^{-k}(N)=S\). The neighborhood
\(N\) is called an {\em isolating neighborhood of \(S\)}. Let us
say that $S$ is an {\em attractor} (resp. a {\em repeller}) if
there exists a compact neighborhood \(N\) of \(S\) such that
$\bigcap_{k\in{\mathbb N}}f^{k}(N)=S$ (resp.
$\bigcap_{k\in{\mathbb N}}f^{-k}(N)=S$).  Attractors and repellers
are special classes of isolated invariant sets. An {\em isolating
block} \(N\) is a compact set such that \({\rm cl}({\rm int}(N))=N\)
and \(f^{-1}(N) \cap N \cap f(N) \subset {\rm int}(N)\). Isolating
blocks are a special class of  isolating neighborhoods.

We consider the {\em exit set of}  a set \(N\subset U\) to be
defined as
\[
N^{-}=\{x \in N: f(x) \notin {\rm int}(N)\}.
\]

Let \(S\) be an isolated invariant set and suppose \(L \subset N\)
is a compact pair contained in $U$. The pair \((N,L)\) is called a
{\em filtration pair} for \(S\) (see Franks and Richeson paper
\cite{FR})  provided \(N\) and \(L\) are each the closure of their
interiors and

\begin{itemize}

\item[1)] \({\rm cl}(N \setminus L)\) is an isolating neighborhood of \(S\),

\item[2)] \(L\) is a neighborhood of \(N^{-}\) in \(N\),

\item[3)] \(f(L) \cap {\rm cl}(N \setminus L) = \emptyset\).

\end{itemize}

\medskip
We will recall in the next section how to construct a filtration
pair $(N,L)$ that is simplicial. In that case, the quotient space
$N_L$ obtained from the pair $(N,L)$ by an identification of $L$
to a point $[L]$ is an ANR.  The properties of filtration pairs
imply that this identification induces a continuous map $\bar f
:N_L \rightarrow N_L$ that fixes $[L]$ and that sends each point
$x\not\in L$ to the projection in $N_L$ of $f(x)$. The fixed point
$[L]$ is an attractor in the following strong sense~: the map
$\bar f$ is locally constant equal to $[L]$ on a neighborhood of
$[L]$. In fact the dynamics of $\bar f$ is easy to understand :
for every point $x$, either there exists an integer $n\geq 0$ such
that $\bar f^n(x)=[L]$ or its $\omega$-limit set is included in
$S$. One can apply Lefschetz-Dold formula to each iterate of $\bar
f$. Using the fact that $[L]$ is an attracting fixed point  and
that $H_i(N_L,{\mathbb Z})\sim H_i(N,L,{\mathbb Z})$, one gets
\[
\Lambda(\bar f^n)=\sum_{i=0}^m(-1)^{i}{\rm tr}(f_{*i}),
\]
where
\[
\Lambda(\bar f^n)=i(\bar f^n, [L])+i(\bar f^n, S)=1+i( f^n, S)
\]

The second aim of this article is to prove the following result:

\begin{Thm}
For any periodic sequence of integers \((I_n)_{n\geq 1}\) satisfying Dold's necessary congruences, there exists an
orientation preserving homeomorphism of ${\mathbb R}^3$ that fixes $\{0\}$, that does not fixed any other compact set, such that
\(\{0\}\) admits an isolating block which is a topological ball and such that
 \(i(f^n,0)=I_n\) for every $n\geq 1$.
\end{Thm}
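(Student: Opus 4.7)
By a standard consequence of Dold's congruences, any periodic integer sequence $(I_n)_{n\geq 1}$ satisfying them can be written in the unique form
\[
I_n = \sum_{k\geq 1} k\,a_k\,\chi_{k\mid n}, \qquad a_k := \frac{1}{k}\sum_{d\mid k}\mu(k/d)\,I_d,
\]
where $a_k\in\mathbb{Z}$ for every $k$, only finitely many are nonzero (this last point is equivalent to periodicity of $(I_n)$), and $\chi_{k\mid n}$ denotes the characteristic function of the divisibility relation $k\mid n$. Let $K:=\{k\geq 1:a_k\neq 0\}$. The strategy is to build, for each $k\in K$, an elementary block that contributes exactly $ka_k$ to $i(f^n,0)$ when $k\mid n$ and $0$ otherwise, then amalgamate these blocks and extend globally to $\mathbb{R}^3$ so that $\{0\}$ remains the unique compact invariant set.

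\textbf{Elementary blocks.} For each $k\in K$, I would construct an orientation-preserving homeomorphism $g_k$ of a closed $3$-ball $B_k$ with $0\in\mathrm{int}(B_k)$ such that $g_k\vert_{\partial B_k}=\mathrm{id}$, the only compact $g_k$-invariant set in $B_k$ is $\{0\}$, and $i(g_k^n,0)=ka_k$ when $k\mid n$ and $0$ otherwise. The model decomposes a neighborhood of $0$ in $B_k$ into $k$ congruent ``pie-slice'' sectors $S_0,\dots,S_{k-1}$ sharing the vertex $0$, and takes $g_k$ to cyclically permute them, $g_k(S_i)\subset S_{i+1\bmod k}$; outside these sectors $g_k$ is a gradient-like flow pushing points toward $\partial B_k$. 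The $k$-th iterate $g_k^k$ then preserves each $S_i$, and on each is conjugate to a $2$-dimensional local homeomorphism at $0$ with isolated fixed-point index $a_k$—realizable by the $2$-dimensional theorem of Babenko--Bogatyi \cite{BB}. Additivity of the index over the sector decomposition then gives $i(g_k^n,0)=ka_k$ whenever $k\mid n$; for $k\nmid n$, the cyclic permutation of sectors combined with a ``balanced'' choice of radial dynamics forces the displacement degree on a small sphere around $0$ to vanish.

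\textbf{Amalgamation and ambient extension.} Arrange the balls $B_k$, $k\in K$, as disjoint star-shaped lobes of a larger $3$-ball $B$, meeting pairwise only at $0$; $B$ is still a topological $3$-ball and will be the required isolating block. Define $f\vert_B$ to agree with $g_k$ on each $B_k$ (consistent, since each $g_k$ is the identity on $\partial B_k$) and with the identity on the complement of the lobes inside $B$. Extend $f$ to $\mathbb{R}^3\setminus B$ by a homeomorphism equal to the identity on $\partial B$ and acting as a strong radial expansion further out, so that every non-stationary orbit escapes to infinity. By the excision and additivity properties of the Lefschetz index applied to the lobe decomposition of a neighborhood of $0$,
\[
i(f^n,0) = \sum_{k\in K} i(g_k^n,0) = \sum_{k\in K} k\,a_k\,\chi_{k\mid n} = I_n.
\]
The ball $B$ is clearly an isolating block for $\{0\}$, and $\{0\}$ is the only compact $f$-invariant set: orbits outside $B$ escape radially, while orbits in each $B_k\setminus\{0\}$ are pushed by $g_k$ to $\partial B_k$ and then out of $B$.

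\textbf{Main obstacle.} The delicate point is the vanishing $i(g_k^n,0)=0$ for $k\nmid n$: a naive rotational or expansive model would produce indices $\pm 1$ from the local orientation-preserving behavior on the axis. The cancellation to $0$ must be engineered by pairing each attracting sector direction with a repelling one within the cyclic orbit of sectors, so that the displacement $x-g_k^n(x)$ contributes equal positive and negative chunks to its degree on a small $2$-sphere. A secondary technical issue is ensuring that the union of the lobes $B_k$ is again a topological ball; this is handled by prescribing a common flat disk through $0$ as the shared face between consecutive lobes, and a compatible choice of the identity boundary conditions for each $g_k$.
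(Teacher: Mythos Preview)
Your amalgamation step is inconsistent. If $f$ agrees with $g_k$ on $B_k$ with $g_k\vert_{\partial B_k}=\mathrm{id}$ and equals the identity on $B\setminus\bigcup_k B_k$, then every point of $B\setminus\bigcup_k\mathrm{int}(B_k)$ is fixed by $f$; in particular $0$ is not an isolated fixed point, $B$ is not an isolating block for $\{0\}$, and there are uncountably many compact invariant sets. Your subsequent claim that orbits are ``pushed to $\partial B_k$ and then out of $B$'' directly contradicts the definition you gave two lines earlier. Any repair that makes $f$ genuinely move points outside the lobes forces $g_k\vert_{\partial B_k}\neq\mathrm{id}$, and then the lobes no longer carry self-maps whose index at $0$ can be computed independently.

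More fundamentally, the identity $i(f^n,0)=\sum_{k\in K} i(g_k^n,0)$ is unjustified. Additivity of the fixed point index applies to \emph{disjoint} open sets isolating \emph{distinct} fixed points; your lobes all share the single fixed point $0$, and the degree of $x\mapsto (x-f(x))/\Vert x-f(x)\Vert$ on a small sphere about $0$ does not split along a wedge-at-a-point decomposition (two lobes each carrying a contraction of index $1$ do not glue to a map of index $2$). The same objection kills your computation $i(g_k^k,0)=ka_k$ ``by additivity over the sector decomposition''. The paper bypasses all of this with a global construction of a different nature: it takes a skew product $f_{g,\varphi}(x,y)=(g(x),\,y+\varphi(x))$ on $S^2\times\mathbb{R}$, compactified at the lower end $e^-\simeq 0$, proves the closed formula $i(f_{g,\varphi}^n,e^-)=1-\Lambda((g\vert_{X_0})^n)$ with $X_0=\varphi^{-1}([0,\infty))$, and then reduces everything to choosing an orientation preserving $g:S^2\to S^2$ and a subsurface $X_0$ (a disjoint union of disks for the $a_k<0$ and of pairs of pants for the $a_k>0$) whose Lefschetz numbers realize the prescribed Dold coefficients. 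Property~(P) on $\varphi$ guarantees at once that $\{e^-\}$ is the only compact invariant set and that every round ball is an isolating block, with no gluing or local index bookkeeping needed.
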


The paper is organized as follows: the next section is devoted to the proof
of the main theorem of this paper and to present its dynamical consequences.
Section 3 is dedicated to Theorem 3.

\medskip
\medskip

\centerline{{\bf 2. Proof of the Main Theorem.}}

\medskip
\medskip

{\bf 2.1. Topological entropy}

\medskip

\medskip

The topological entropy is a numerical invariant related to the
orbits growth. It represents the exponential growth rate for the
number of orbit segments distinguishable with arbitrary fine
precision. Let $f:X \rightarrow X$ be a continuous map with $X$ a
compact metric space. We define an increasing sequence of metrics
$d_n^f$, $n=1,2,\dots,$ with $d_1^f=d$,
\[
d_n^f(x,y)=\max_{0 \leq i \leq n-1}d(f^i(x),f^i(y)).
\]

We denote the open balls
\[
B_f(x,\epsilon,n)=\{y \in X : d_n^f(x,y)< \epsilon\}.
\]

A set $E \subset X$ is said to be $(n, \epsilon)$-{\em spanning}
if $X \subset \bigcup_{x \in E} B_f(x, \epsilon, n)$. If
$S_d(f,\epsilon,n)$ is the minimal cardinality of an
$(n,\epsilon)$-spanning set then we define the topological entropy
of $f$ as
\[
h_{\rm top}(f)=\lim_{\epsilon \to 0} \limsup_{n \to \infty}
\frac{1}{n} \log S_d(f,\epsilon,n).
\]

This definition does not depend on the metric $d$ but only on the
topology of $X$.

It is known that the restriction of  a map to the non-wandering
set $\Omega(f)$ captures the entropy of the system. If $\Omega(f)$
is the (closed and invariant) set of points $x \in X$ such that
for every open neighborhood $U$ of $x$ there is $n>0$ such that
$f^n(U) \cap U \neq \emptyset$, then we have the following
equality
\[
h_{\rm top}(f)=h_{\rm top}(f|_{\Omega(f)})
\]
 due to Bowen (see \cite{KH}, \cite{Ro} or
\cite{W} for details).

Recall now the following result of Manning (Theorem 2 in \cite{M})
which can be applied to spaces with {\em nice} local properties
like finite CW-complexes of compact ANR's.

\begin{Thm}
If $X$ is a compact metric space with metric \(d\) such that:

i) For every \(\epsilon >0\) there exists \(\delta >0\) such that
if \(d(x,y) < \delta \) then there exists a path \(\sigma: [0,1]
\rightarrow X\) from \(x\) to \(y\) with \({\rm diam}
(\sigma([0,1]) < \epsilon\).

ii) There exists \(\epsilon_0 >0\) such that any loop of diameter
\(< \epsilon_0\) is homotopically trivial in \(X\).

Then for any continuous map $f:X \rightarrow X$
\[
h_{\rm top}(f) \geq \log(|\lambda|)
\]
\noindent for every eigenvalue $\lambda$ of $f_{*1}:H_1(X,{\mathbb
Z}) \rightarrow H_1(X,{\mathbb Z})$.
\end{Thm}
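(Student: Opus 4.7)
The statement is Manning's classical inequality \cite{M}; I would prove it by a counting argument in the universal cover of $X$.

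Conditions (i) and (ii) are uniform versions of local path-connectedness and semi-local simple connectedness. Together they ensure that $X$ admits a universal cover $\pi : \tilde X \to X$ with deck transformation group $\Gamma \cong \pi_1(X, x_0)$, and that lifts of small-diameter paths are uniformly controlled. I would fix a basepoint $x_0 \in X$ and a lift $\tilde x_0$, then lift $f$ to $\tilde f : \tilde X \to \tilde X$. This induces an endomorphism $f_\sharp : \Gamma \to \Gamma$ whose abelianization, under the Hurewicz isomorphism $\Gamma^{\mathrm{ab}} \cong H_1(X, \mathbb Z)$, equals $f_{*1}$. Given an eigenvalue $\lambda$ of $f_{*1}$, use the real Jordan decomposition to select a class $\gamma \in H_1(X, \mathbb Z)$ for which $\|f_{*1}^n \gamma\| \geq c |\lambda|^n$ in a fixed norm on $H_1(X, \mathbb Z) \otimes \mathbb R$, and represent $\gamma$ by a loop $\alpha$ at $x_0$.

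The heart of the argument is to bound $S_d(f, \epsilon, n)$ from below by $|\lambda|^n$ up to constants. Fix $\epsilon < \epsilon_0 / 3$ and let $\delta$ be the constant associated to $\epsilon$ by (i). For an $(n, \epsilon)$-spanning set $E$, a shadowing construction produces, from each tuple of points of $E$ that approximate the $f^j$-orbits of the (finitely many) subdivision points of $\alpha$, a loop obtained by joining the shadow orbits with short bridge paths of diameter $< \epsilon$. Hypothesis (ii) guarantees that each such bridge is null-homotopic, so that the resulting shadow loop is homologous to $f^n \circ \alpha$ and represents $f_{*1}^n \gamma$. A pigeonhole argument, working in the deck-translate lattice of $\tilde X$ (or equivalently with an exponentially growing family of initial loops simultaneously), then forces $|E| \geq c' |\lambda|^n$, which on taking $\log / n$ and $n \to \infty$ yields $h_{\rm top}(f) \geq \log |\lambda|$.

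The principal obstacle is this last counting step: one must match the exponent $n \log |\lambda|$ exactly rather than lose a factor $1/K$ from a crude estimate of the form $|E|^K \geq |\lambda|^n$. Manning overcomes this by applying the shadowing construction not to a single loop $\alpha$ but to a family of loops of length $\sim n$ in a fixed finite generating set of $\Gamma$, whose numbers grow exponentially in $n$; the uniform local control from (i)--(ii) ensures that distinct members of the family produce distinct shadow-classes and that no uncontrolled homotopy drift accumulates over $n$ iterations.
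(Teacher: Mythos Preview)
The paper does not prove this statement: it is recalled as ``Theorem~2 in \cite{M}'' (Manning's 1975 paper) and used as a black box in Section~2.3. There is therefore no proof in the paper against which to compare your sketch.

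For what it is worth, your outline does follow Manning's strategy (universal cover, lift of $f$, growth in $H_1$ forcing growth of spanning sets), but two points in your account of that argument are off. First, the bridges are not between $f^i(e_j)$ and $f^i(e_{j+1})$ along the loop direction --- those points need not be close, since $f^i$ of a short arc of $\alpha$ can be arbitrarily long. What one actually uses is that the lifted orbits $\tilde f^i(\tilde e_j)$ and $\tilde f^i(\tilde p_j)$ stay $\epsilon$-close for all $i\le n-1$ (this is where (i) and (ii) enter), so the shadow $e_j\in E$ pins down, up to bounded ambiguity, the fundamental domain containing $\tilde f^{n-1}(\tilde p_j)$. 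Second, the device you invoke in your last paragraph (``a family of loops of length $\sim n$'') is not how Manning fixes the exponent, nor is it needed: a single loop $\alpha$ with a fixed number $K$ of subdivision points already gives $|E|\ge c|\lambda|^{n}/K$, because the lifted path $\tilde f^{n-1}(\tilde\alpha)$ has endpoints displaced by $f_\sharp^{\,n-1}([\alpha])$ while lying in the union of $K$ translates of $\tilde f^{n-1}(D)$, and a connectedness-plus-covering argument bounds the diameter of each translate by $O(|E|)$.
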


\medskip
\medskip
\noindent{\bf 2.2. Some algebraic lemmas}

\medskip
Recall the classical following results~:

\begin{Prop}
Let $A$ and $B$ two finitely generated ${\mathbb C}$-vector spaces
and $u:A\to A$ and $v: B\to B$ two linear maps. If ${\rm
tr}(u^k)={\rm tr}(v^k)$ for every $k\geq 1$, then $u$ and $v$ have
the same non-zero eigenvalues counted with their multiplicities
\end{Prop}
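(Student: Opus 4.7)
The plan is to translate the trace identities into a linear system in the (integer) multiplicities of the nonzero eigenvalues and close the argument with a Vandermonde inversion. Over $\mathbb{C}$ one may upper-triangularise both $u$ and $v$, so that ${\rm tr}(u^k)$ and ${\rm tr}(v^k)$ are the power sums of their eigenvalues counted with algebraic multiplicity. Since the zero eigenvalue contributes $0$ for every $k\geq 1$, one can write
$$ {\rm tr}(u^k)=\sum_{i=1}^{r}m_i\lambda_i^k, \qquad {\rm tr}(v^k)=\sum_{j=1}^{r'}n_j\mu_j^k, $$
where $\lambda_1,\dots,\lambda_r$ (resp.\ $\mu_1,\dots,\mu_{r'}$) are the distinct nonzero eigenvalues of $u$ (resp.\ of $v$) with algebraic multiplicities $m_i$ (resp.\ $n_j$).

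Next I would merge the two lists. Let $\alpha_1,\dots,\alpha_s$ be an enumeration of the union $\{\lambda_i\}\cup\{\mu_j\}$ without repetition, and for each $l$ let $c_l\in\mathbb{Z}$ be the multiplicity of $\alpha_l$ as an eigenvalue of $u$ minus its multiplicity as an eigenvalue of $v$ (a missing eigenvalue contributing $0$). The hypothesis becomes the infinite system
$$\sum_{l=1}^{s}c_l\,\alpha_l^k=0\qquad \text{for every }k\geq 1.$$

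To finish, it is enough to consider the subsystem for $k=1,\dots,s$. Its coefficient matrix $M=(\alpha_l^k)_{1\le k,l\le s}$ factors as $M=V\cdot D$, with $V=(\alpha_l^{k-1})_{k,l}$ a Vandermonde matrix (invertible because the $\alpha_l$ are pairwise distinct) and $D={\rm diag}(\alpha_1,\dots,\alpha_s)$ (invertible because the $\alpha_l$ are nonzero). Hence $M$ is invertible and $c_l=0$ for every $l$, which is exactly the claim that $u$ and $v$ have the same nonzero eigenvalues with the same algebraic multiplicities. The argument is entirely standard linear algebra; the only point requiring attention is the a priori exclusion of the zero eigenvalue, which is precisely what makes the diagonal factor $D$ invertible and allows the Vandermonde inversion to apply.
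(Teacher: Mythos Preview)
Your proof is correct, but it takes a different route from the paper. The paper encodes the trace identities in a generating function: writing $\sum_{k\geq 1}\frac{{\rm tr}(u^k)}{k}Z^k=-\sum_i\log(1-\lambda_i Z)$ (and similarly for $v$), the hypothesis yields the polynomial equality $\prod_i(1-\lambda_i Z)=\prod_j(1-\mu_j Z)$, and unique factorisation finishes the job. Your argument instead packages the identities as a finite linear system in the multiplicity differences and inverts it via a Vandermonde factorisation. Both approaches are classical; yours is arguably more elementary in that it avoids formal power series and logarithms, and it makes transparent why only finitely many equations ($k=1,\dots,s$) are needed. The paper's generating-function route is slicker in that it passes directly to a polynomial identity without any matrix manipulation, and it generalises more readily to related zeta-function computations. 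Either way the essential content is the same Newton-type fact that power sums determine the nonzero spectrum.
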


\begin{proof}

If $\{\lambda_1, \dots, \lambda_r\}$ and $\{\mu_1, \dots, \mu_s\}$
are the non-zero eigenvalues of $u$ and $v$ respectively, we have
\[
\sum_{k=1}^{\infty}\frac{{\rm
tr}(u^k)}{k}Z^k=\sum_{k=1}^{\infty}\frac{\lambda_1^k+\cdots+\lambda_r^k}{k}Z^k=-\sum_{i=1}^r
\log (1-\lambda_i Z).
\]
The hypothesis gives us the polynomial equality
\[
\prod_{i=1}^r (1-\lambda_i Z)= \prod_{j=1}^s (1-\mu_j Z),
\]
\noindent which implies that $r=s$ and that for every \(j\) there
exists \(i_j\) such that $\mu_j=\lambda_{i_j}$.

\end{proof}

\begin{Prop}
Let $A$ be a finitely generated $\mathbb Z$-module and $u:A\to A$
be a morphism. If all the eigenvalues of $u$ have a modulus $\leq
1$, then all the non-zero eigenvalues are roots of unity.
\end{Prop}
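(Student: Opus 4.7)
The plan is to reduce the statement to Kronecker's classical theorem on algebraic integers. First I would pass to the torsion-free quotient $A/T$ (where $T$ is the torsion submodule), on which $u$ induces a morphism $\bar u$. Since $T\otimes_{\mathbb Z}{\mathbb C}=0$, the non-zero eigenvalues of $u$ acting on $A\otimes_{\mathbb Z}{\mathbb C}$ coincide with those of $\bar u$ acting on $(A/T)\otimes_{\mathbb Z}{\mathbb C}$. Because $A/T$ is free of finite rank, $\bar u$ is represented in any basis by a matrix with integer coefficients, and its characteristic polynomial $P\in{\mathbb Z}[X]$ is monic.

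Let $\lambda$ be a non-zero eigenvalue of $u$. As a root of $P$, it is an algebraic integer. Its minimal polynomial over ${\mathbb Q}$ divides $P$ in ${\mathbb Z}[X]$, so all the Galois conjugates of $\lambda$ are roots of $P$, hence eigenvalues of $u$, and by hypothesis have modulus at most $1$.

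It remains to verify Kronecker's theorem: a non-zero algebraic integer whose Galois conjugates all have modulus at most $1$ is a root of unity. I would prove this by a pigeonhole argument. For every $n\geq 1$ the element $\lambda^n$ is a non-zero algebraic integer, its conjugates are the $n$-th powers of those of $\lambda$ (still of modulus at most $1$), and its minimal polynomial has degree dividing that of $\lambda$, hence at most $\deg P$. The coefficients of such a minimal polynomial are, up to sign, elementary symmetric functions of its roots and are therefore bounded by binomial coefficients depending only on $\deg P$. There are only finitely many monic polynomials in ${\mathbb Z}[X]$ of bounded degree with bounded coefficients, so the sequence $(\lambda^n)_{n\geq 1}$ takes only finitely many values, and there exist $m>n\geq 1$ with $\lambda^m=\lambda^n$. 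Since $\lambda\neq 0$ this gives $\lambda^{m-n}=1$.

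The only real obstacle is recognizing that Kronecker's theorem is what is needed; everything else is routine linear algebra over ${\mathbb Z}$. The single subtle point in the reduction is the observation that conjugates of an eigenvalue of $u$ are themselves eigenvalues, which uses precisely that after killing torsion the characteristic polynomial has integer (and in particular rational) coefficients.
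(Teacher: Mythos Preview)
Your argument is correct. It follows a different, more number-theoretic route than the paper's proof.

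The paper argues in two steps. First, writing $P_u(X)=X^sQ(X)$ with $Q(X)=\prod_{i=1}^r(X-\lambda_i)$, it observes that $Q\in{\mathbb Z}[X]$, so $\prod_i\lambda_i$ is a nonzero integer; combined with $|\lambda_i|\leq 1$ this forces $|\lambda_i|=1$ for every $i$. Second, writing $\lambda_i=e^{2\pi i\alpha_i}$, it uses recurrence of the rotation $(t_1,\dots,t_r)\mapsto(t_1+\alpha_1,\dots,t_r+\alpha_r)$ on ${\mathbb R}^r/{\mathbb Z}^r$ to find $n>0$ with every $\lambda_i^n$ close to $1$; then ${\rm tr}(u^n)=\sum_i\lambda_i^n$ is an integer close to $r$, hence equal to $r$, which forces $\lambda_i^n=1$ for all $i$ simultaneously.

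Your proof instead invokes Kronecker's theorem directly, after the clean reduction to the free quotient $A/T$ that you spell out (and that the paper leaves implicit). The pigeonhole proof of Kronecker you give is the standard one and is entirely correct; the point that the Galois conjugates of an eigenvalue are again eigenvalues is exactly what makes the hypothesis usable. The paper's argument is a shade more dynamical and yields at once a single $n$ with $\lambda_i^n=1$ for all $i$; yours gives this too, of course, by taking the least common multiple of the individual orders. Either approach is perfectly adequate for the application.
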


\begin{proof}
The characteristic polynomial of $u$ may be written
$$P_u(X)=X^s\prod_{1\leq i\leq r} (X-\lambda_i),$$ where $\lambda_1, $\dots, $\lambda_r$ are the non-zero eigenvalues of $u$.
The coefficients of $P_u$ being integers, one deduces that
$\prod_{1\leq i\leq r}\lambda_i\in{\mathbb Z}$.  By hypothesis,
this implies that each $\lambda_i$ satisfies
$\vert\lambda_i\vert=1$ and may be written
$\lambda_i=e^{2i\pi\alpha_i}$, where $\alpha_i\in{\mathbb
R}/{\mathbb Z}$. One knows that every point of  ${\mathbb
R}^r/{\mathbb Z}^r$ is a recurrent point of the rotation
$$(t_1,\dots,t_r)\mapsto (t_1+\alpha_1,\dots, t_r+\alpha_r).$$
One deduces that there exists $n>0$ such that every complex number
$\lambda_i^n=e^{2i\pi n\alpha_i}$ is close to $1$. This implies
that ${\rm tr}(u^n)=\lambda_1^n+\dots+ \lambda_r^n$ is close to
$r$. But this number being an integer must be equal to $r$ and
this implies that each $\lambda_i^n$ is equal to $1$.
\end{proof}

\medskip
\medskip

{\bf 2.3. Proof of Theorem 2. }

\medskip
\medskip
Let us begin with a more detailed construction of filtration pairs
(see also \cite{FR}).

\begin{Prop}
Let \(U \subset {\mathbb R}^m\) be an open set and \(f:U
\rightarrow f(U) \subset {\mathbb R}^m\) be a homeomorphism.  If
$S$ is a connected invariant compact set isolated with respect to
$f$, there exists a filtration pair \((N, L)\) for \(S\) that is
homeomorphic to a finite simplicial pair: the sets $N$, $L$ and
${\rm cl}(N \setminus L)$ are compact topological $m$-dimensional
manifolds. More precisely:

\begin{itemize}

\item[1)] if $S$ is neither an attractor, nor a repeller, one may suppose that $N$ is connected, that $L$ is not empty and
that no bounded component of ${\mathbb R}^m\setminus L$ is
included in $N$;

\item[2)] if $S$ is a repeller, one may suppose that $N$ is connected, that $L$ is not empty and
that there exists a unique bounded component of ${\mathbb
R}^m\setminus L$ that is included in $N$ and this component
contains $S$;

\item[3)] if $S$ is an attractor, one may suppose that $N$ is connected and that $L$ is empty.

\end{itemize}

\end{Prop}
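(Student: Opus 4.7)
The plan is to adapt the filtration-pair construction of Franks--Richeson \cite{FR} to the Euclidean setting, carrying it out on a fine triangulation of $\mathbb{R}^m$ so that the resulting pair is simplicial, and then adjusting it to meet the case-by-case requirements.

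First I would pick an isolating neighborhood $V$ of $S$ in $U$ and an integer $k$ so large that $N_0 := \bigcap_{i=0}^{k} f^{-i}(V)$ is contained in the interior of $V$. Then I would fix a triangulation $T$ of $\mathbb{R}^m$ whose mesh is small compared to the modulus of continuity of $f$ and $f^{-1}$ on $V$, and define $N$ to be the full-dimensional simplicial neighborhood of $N_0$ in $T$ and $L$ to be the full-dimensional simplicial neighborhood of the exit set $\{x \in N_0 : f(x) \notin \mathrm{int}(N_0)\}$. Because $N$, $L$ and $\mathrm{cl}(N \setminus L)$ are each full-dimensional subcomplexes of a triangulated manifold, they are automatically compact topological $m$-manifolds with boundary, and the three filtration-pair axioms follow from the Franks--Richeson argument once the mesh of $T$ is small enough. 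Connectedness of $N$ is obtained by passing to the component containing $S$.

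Second, I would treat the three dichotomies. If $S$ is an attractor, some neighborhood of $S$ is mapped into its own interior, so $L$ may be taken empty, giving case 3. If $S$ is a repeller, every forward orbit starting near $S$ eventually escapes $V$, so $L$ must surround $S$; taking $V$ connected and small enough, the bounded component of $\mathbb{R}^m \setminus L$ containing $S$ is unique, giving case 2. If $S$ is neither, then arbitrarily close to $S$ there exist points with escaping forward orbits and points with escaping backward orbits, so $L$ is nonempty, giving case 1.

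The remaining and main technical difficulty lies in case 1 (and in the uniqueness part of case 2): ensuring that no bounded component of $\mathbb{R}^m \setminus L$ is spuriously contained in $N$. The natural strategy is to absorb each such bounded component $C$ into $L$, i.e.\ to replace $L$ by $L \cup C$. The first two filtration-pair axioms survive trivially, but the third axiom $f(L) \cap \mathrm{cl}(N \setminus L) = \emptyset$ requires that no point of $C$ is sent by $f$ into the main body of $N \setminus L$. This is the delicate point: it is not automatic and has to be arranged by refining the initial triangulation further, or by a preliminary trimming of $N$ so that the hole components $C$ are themselves dynamically trapped in the sense that every forward orbit starting in $C$ either stays in $C \cup L$ or exits $N$ through $L$ in finitely many steps. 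Once this is verified, iterating the absorption over the finitely many bounded components yields the filtration pair required by the proposition.
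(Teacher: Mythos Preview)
Your overall strategy---build a simplicial filtration pair, pass to the component of $N$ containing $S$, then absorb the spurious bounded components of $\mathbb{R}^m\setminus L$ into $L$---matches the paper's. The gap is in your last paragraph: you flag the third axiom as ``the delicate point'' and propose to handle it by further refining the triangulation or by trimming $N$, but you give no actual mechanism, and it is not clear that refinement helps at all. The paper shows that no refinement is needed: absorption works automatically, and the argument hinges precisely on the hypothesis that $S$ is not a repeller.

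Here is the missing idea. Arrange the initial $L$ to satisfy the slightly stronger condition $f(L)\cap N\cap f^{-1}(N)=\emptyset$ (possible because $f(N^-)\cap N\cap f^{-1}(N)=\emptyset$ for an isolating block $N$, and $L$ is a small neighborhood of $N^-$). Let $W$ be a bounded component of $\mathbb{R}^m\setminus L$ with $W\subset N$. Then $\partial W\subset L$ and $\mathrm{cl}(W)\subset\mathrm{cl}(N\setminus L)$, so $f(\partial W)\cap\mathrm{cl}(W)=\emptyset$. Since $f(\partial W)=\partial f(W)$ and $\mathrm{cl}(W)$ is connected, one gets a dichotomy: either $\mathrm{cl}(W)\subset f(W)$ or $\mathrm{cl}(W)\cap\mathrm{cl}(f(W))=\emptyset$. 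The first branch would force the connected set $S$ (which is disjoint from $L$) to lie in $W$ and to equal $\bigcap_{k\ge0}f^{-k}(\mathrm{cl}(W))$, making $S$ a repeller---contradiction. So the second branch holds and in particular $S\cap\mathrm{cl}(W)=\emptyset$. A second connectedness argument, now applied to the connected set $f^{-1}(N)$ together with $f(\partial W)\cap f^{-1}(N)\subset f(L)\cap N\cap f^{-1}(N)=\emptyset$, yields $f(W)\cap N\cap f^{-1}(N)=\emptyset$, hence $f(W)\cap\mathrm{cl}(N\setminus L')=\emptyset$ after enlarging $L$ to $L'=L\cup\bigcup_i W_i$. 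This is exactly the verification your proof lacks. The same dichotomy also handles case~2: when $S$ \emph{is} a repeller the first branch can and does occur for the one component containing $S$, while every other bounded component inside $N$ is absorbed as above---this gives the uniqueness you asserted only loosely via ``taking $V$ small enough''.
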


\begin{proof}
Suppose first that $S$ is neither an attractor, nor a repeller.
One constructs first an isolating block, that means  an isolating
neighborhood  \(N\) of  \(S\) satisfying
$$f(N)\cap N\cap f^{-1} (N)\subset{\rm int}(N),$$ (see \cite{FR}) for example).
Replacing $N$ by a small neighborhood that is a $n$-dimensional manifold, one may suppose that $N$ is itself a manifold. The connected component $N'$ of $N$ that contains $S$ satisfies a similar equality because $\partial N'\subset \partial N$.
Replacing $N$ by $N'$ one may suppose that $N$ is connected.
The exit set $N^-$ is not empty because $S$ is not an attractor. The inclusions $N^-\subset N$ and $f(N^-)\cap N\subset \partial N$ imply that
$$f(N^-)\cap N\cap f^{-1}(N) =\emptyset.$$

One can find a small neighborhood $L$ of $N^-$ in $N$ satisfying
$$f(L)\cap N\cap f^{-1}(N) =\emptyset ,$$
 such that $(N,L)$ is a finite simplicial pair. Observe that $(N,L)$ is a filtration pair because ${\rm cl}(N\setminus L)\subset N\cap f^{-1}(N)$.

Write $W_i$, $1\leq i\leq r$ for the bounded connected components
of ${\mathbb R}^n\setminus L$ that are included in $N$ and define
$L'=L\bigcup_{1\leq i\leq r} W_i$. To get the proposition, one
must prove that $(N,L')$ is a filtration pair. As one knows that
$$ {\rm cl}(N\setminus L')\subset {\rm cl}(N\setminus L)\subset N\cap f^{-1}(N),$$
it is sufficient to prove that for every $i\in\{1,\dots, r\}$ one
has $S\cap {\rm cl}(W_i)=\emptyset$ and $ f(W_i)\cap N\cap
f^{-1}(N) =\emptyset$. From $$f(\partial W_i)\cap {\rm cl}
(W_i)\subset f(L)\cap {\rm cl}(N\setminus L)=\emptyset,$$ one
deduces that either ${\rm cl }(W_i)\subset f(W_i)$ or ${\rm cl}
(W_i)\cap{\rm cl}(f(W_i))=\emptyset$. In the first case the
connected $S$ must be included in $W_i$ and equal to
$\bigcap_{k\geq 0} f^{-k}({\rm cl} (W_i))$. It would be a
repeller. So one knows that   ${\rm cl} (W_i)\cap{\rm
cl}(f(W_i))=\emptyset$ and therefore that $S \cap {\rm cl}
(W_i)=\emptyset$.

The inclusion $W_i\subset N\setminus N^-$ implies that
$f(W_i)\subset {\rm int}(N)$ and therefore that $f(\partial
W_i)\subset N$. The inclusion $\partial W_i\subset L$ implies that
 $$f(\partial W_i)\cap f^{-1}(N)\subset f(L)\cap N\cap f^{-1}(N)=\emptyset.$$ By connectedness  of $f^{-1}(N)$, one deduces
  that either $f^{-1}(N)\subset f(W_i)$ or $f^{-1}(N)\cap{\rm cl}(f(W_i))=\emptyset$.
  The first case does not occur because $S\subset f^{-1}(N)$ and $S\cap  f(W_i)=\emptyset$.
  So we are in the second case and we deduce that   $ f(W_i)\cap N\cap f^{-1}(N) =\emptyset$.

 \medskip
 In the case where $S$ is a repeller, one may find an isolating block $N$ such that $N\subset f({\rm int}(N))$.
 We can suppose that it is a connected  $m$-dimensional and construct in a similar way a pair $(N,L)$.
 Observe that $\partial N\subset N^-\subset L$. To get the proposition,
 keeping the same notations as before, we must prove that one of the $W_i$
contains $S$.
 In other words we must prove that the connected component of ${\mathbb R}^m\setminus L$ that contains $S$ is bounded and contained in $N$.
  It is a consequence of the fact that $W_i$ meets $N$ (it contains $S$) but does not meet $\partial N$.

In the case where $S$ is an attractor one may find an isolating
block $n$ such that $f(N)\subset{\rm int}(N)$. Here again one may
suppose that $N$ is a connected compact $m$-dimensional manifold.
The exit set is empty and $(N,\emptyset)$ satisfies the
proposition.
\end{proof}

Suppose that $(N,L)$ is a filtration pair that satisfies the
previous proposition and write $\bar f: N_L\to N_L$ for the
induced map.

\begin{Prop}
In the cases 1) and 2) one has  $H_0(N_L,{\mathbb Z})\sim\mathbb
Z$,  in the case 3) one has  $H_0(N_L,{\mathbb Z})\sim{\mathbb
Z}^2$, in all cases one has $\bar f_{*0}={\rm Id}$.

 In the cases 1) and 3) one has  $H_m(N_L,{\mathbb Z})=\{0\}$,  in the case 2) one has $H_m(N_L,{\mathbb Z})\sim\mathbb Z$.
 In this last case one has $\bar f_{*m}={\rm Id}$ if $f$ preserves the orientation and  $\bar f_{*m}=-{\rm Id}$ if $f$ reverses the
 orientation.

\end{Prop}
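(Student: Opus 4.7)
The plan is to reduce everything to a chain-level analysis of the simplicial pair $(N, L)$, with $N$ a compact connected orientable $m$-manifold embedded in ${\mathbb R}^m$. For $H_0$ the computation is immediate: since $\bar f$ permutes connected components of $N_L$, one simply counts them. In cases 1) and 2), $L$ is nonempty and $N$ connected, so $N_L$ is connected and $H_0 \cong {\mathbb Z}$, with $\bar f_{*0} = {\rm Id}$ automatic. In case 3), $L = \emptyset$ forces $N_L = N \sqcup \{[L]\}$, whence $H_0 \cong {\mathbb Z}^2$; the attractor condition $f(N) \subset {\rm int}(N)$ ensures $\bar f$ maps $N$ into itself, so both components are preserved.

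For the top group I plan to use the simplicial chain complex $C_*(N)/C_*(L)$. Since every interior $(m-1)$-face of $N$ is shared by exactly two $m$-simplices with opposite induced orientations, while a face on $\partial N$ is incident to a unique $m$-simplex, the condition for a chain $c = \sum a_\sigma \sigma$ to be a relative cycle modulo $L$ is that $a_\sigma$ be constant on each connected component of the ``dual graph modulo $L$'' (vertices: $m$-simplices of $N \setminus L$; edges: interior $(m-1)$-faces not in $L$), with the additional constraint that this constant vanishes whenever the component contains an $m$-simplex having an $(m-1)$-face on $\partial N \setminus L$. Hence $H_m(N, L; {\mathbb Z}) \cong {\mathbb Z}^k$, where $k$ counts the components of $N \setminus L$ not meeting $\partial N \setminus L$.

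Case 2) is then handled by noting that the repeller isolating-block condition $f^{-1}(N) \subset {\rm int}(N)$ forces $f(x) \notin N$ for every $x \in \partial N$, whence $\partial N \subset N^{-} \subset L$; combined with the Proposition 1 assertion that the unique bounded component of ${\mathbb R}^m \setminus L$ in $N$ equals $N \setminus L$, this gives $k = 1$ and $H_m \cong {\mathbb Z}$. Case 3) is trivial since $L = \emptyset$ while $\partial N \neq \emptyset$, giving $H_m(N_L) = H_m(N) = 0$. The main obstacle lies in case 1), where I need to show $k = 0$: that every connected component $C$ of $N \setminus L$ meets $\partial N \setminus L$. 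Assuming $C \cap \partial N = \emptyset$, so $C \subset {\rm int}(N) \setminus L$ is open in ${\mathbb R}^m$, local connectedness of $N \setminus L$ together with $C$ being a component forces every limit point of $C$ lying in $N \setminus L$ to belong to $C$; combined with $C \cap \partial N = \emptyset$, this traps the topological boundary $\partial C$ inside $L$, making $C$ a bounded connected component of ${\mathbb R}^m \setminus L$ contained in $N$, contradicting the hypothesis of case 1).

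Finally, for $\bar f_{*m}$ in case 2), I would compute via local degree. Take a regular value $y \in S \subset {\rm int}(N \setminus L)$; the filtration pair condition $f(L) \cap {\rm cl}(N \setminus L) = \emptyset$ guarantees $f^{-1}(y) \notin L$, so $\bar f^{-1}(y) = \{f^{-1}(y)\}$ is the single point $f^{-1}(y) \in S \subset {\rm int}(N \setminus L)$. At this preimage $\bar f$ coincides locally with the homeomorphism $f$, so the degree on $H_m(N_L) \cong {\mathbb Z}$ equals $+1$ if $f$ preserves orientation and $-1$ if it reverses it.
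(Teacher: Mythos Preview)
Your proof is correct but takes a somewhat different route from the paper's. For $H_0$ the arguments coincide. For $H_m$, the paper invokes Alexander duality
\[
H_m(N,L;{\mathbb Z})\;\cong\; H^0({\mathbb R}^m\setminus L,\,{\mathbb R}^m\setminus N;{\mathbb Z}),
\]
which immediately identifies $H_m(N,L)$ with the free abelian group on the bounded components of ${\mathbb R}^m\setminus L$ contained in $N$; the three cases then follow straight from the hypotheses of Proposition~3. You instead work directly with the simplicial chain complex, identifying relative top cycles with locally constant functions on the $m$-simplices of $N\setminus L$ that vanish on any component touching $\partial N\setminus L$. In case~1) you then prove \emph{by hand} the key topological fact---that a component of $N\setminus L$ missing $\partial N$ would be a bounded component of ${\mathbb R}^m\setminus L$ inside $N$---which is precisely what Alexander duality encodes. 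So your argument is more elementary and self-contained, at the cost of redoing a piece of the duality statement; the paper's is shorter but invokes a heavier tool.

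For $\bar f_{*m}$ in case~2) the paper observes that the generator is represented by the pair $(\overline W,\partial W)$ with $W$ the component of ${\mathbb R}^m\setminus L$ containing $S$, and that $(f(\overline W),f(\partial W))\subset (N,L)$ is homologous to $\pm(\overline W,\partial W)$ according to whether $f$ preserves or reverses orientation. Your local-degree computation at a point $y\in S$ is an equivalent way to read off the same sign: since $H_m(N_L)\to H_m(N_L,\,N_L\setminus\{y\})$ is an isomorphism for $y\in W$, the action on $H_m$ is determined by the local degree of $f$ at the unique preimage $f^{-1}(y)\in S$. One small remark: the word ``regular value'' is inappropriate here since $f$ is only a homeomorphism, but your actual argument does not use regularity---only that $\bar f^{-1}(y)$ is a single point at which $\bar f$ agrees with the local homeomorphism $f$---so this is purely a matter of phrasing.
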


\begin{proof}

In cases 1) and 2), the set $N$ being connected and $L$ being non
empty, one knows that $N_L$ is connected. Therefore
$H_0(N_L,{\mathbb Z})={\mathbb Z}$ and obviously $\bar f_{*0}={\rm
Id}$. In case 3), $N_L$ is no more connected because one add an
isolated point. This point is fixed and the other component $N$ is
sent in itself, therefore $H_0(N_L,{\mathbb Z})\sim{\mathbb Z}^2$
and $\bar f_{*0}={\rm Id}$.

\medskip
Recall that $H_m(N_L,{\mathbb Z})$ is equal to the relative
homology group $H_m(N,L,{\mathbb Z})$. By Alexander's duality
theorem (see \cite{Sw}, page 313)  one knows that
$$H_m(N,L,{\mathbb Z})\sim H^0({\mathbb R}^m\setminus L,\,{\mathbb
R}^m\setminus N,\,{\mathbb Z}).$$ Therefore one must look at
locally constant function defined on ${\mathbb R}^m \setminus L$
that vanishes on ${\mathbb R}^m \setminus N$.  Equivalently, one
must look at bounded connected components of ${\mathbb R}^m
\setminus L$ that are included in $ N$. There is none in cases 1)
and 3) , there is exactly one in case 2).  In this case a
generator of $H_m(N,L,{\mathbb Z})$ is given by $(\overline W,
\partial W)$ where $W$ is the connected component of ${\mathbb
R}^m\setminus L$ that contains $S$.  Observe now that  the pair
$(f(\overline W), f(\partial W))\subset (N,L)$ is homologous to
$(\overline W,
\partial W)$ if $f$ preserves the orientation and to -$(\overline
W, \partial W)$ if $f$ reverses the orientation.

\end{proof}

Let us prove now Theorem 2. We suppose that $m=3$ and that $h_{\rm
top}( f\vert_S)=0$. We choose a filtration pair
 $(N,L)$ satisfying Proposition 3. Suppose first that $S$ is neither an attractor, nor a repeller. From the two equalities
\[
\Lambda(\bar f^n)={\rm tr}(\bar f^n_{*_0})- {\rm tr}(\bar
f^n_{*1})+{\rm tr}(\bar f^n_{*2})-{\rm tr}(\bar f^n_{*3})
\]
and
\[
\Lambda(\bar f^n)=i(\bar f^n, [L])+i(\bar f^n, S)=1+i( f^n, S),
\]
and from Proposition 4 we deduce  that
\[
i( f^n, S)=- {\rm tr}(\bar f^n_{*1})+{\rm tr}(\bar f^n_{*2}).
\]

To prove that the sequence $(i( f^n, S)_{n\geq 1}$ is bounded, it
is sufficient to prove that the spectral radius of both maps $\bar
f^n_{*1}$ and $\bar f^n_{*2}$ are bounded by $1$. Then,
Proposition 2 will imply that the eigenvalues are roots of unity,
which will give us the periodicity condition. The fact that the
spectral radius of the first map is bounded by $1$ is a
consequence of Manning's theorem. Indeed, the set of non wandering
points $\Omega(\bar f)$ is  included in $\{[L]\}\cup S$ because
$\Omega(\bar f)\setminus\{[L]\}$ is a closed set included in $N
\setminus L$ and invariant by $f$. Therefore
\[
h_{\rm top}(\bar f)=h_{\rm top}(\bar f|_{\Omega(\bar f)})=h_{\rm
top}(f|_{S})=0.
\]
 To prove a similar result for $\bar f_{*,2}$ we will use the following duality argument.  Let
$(N,L')$ be a filtration pair for $f^{-1}$ satisfying Proposition
3 and let us consider the quotient space $N_{L'}$ and the induced
map $\overline{f^{-1}}:N_{L'} \rightarrow N_{L'}$.

The pointed union $M=N_L \vee N_{L'}$ of $N_L$ and $N_{L'}$
obtained by the identification of the points $[L]$ and $[L']$ to a
point $*$ is a compact metric ANR and $*$ is an attractor of the
induced map $\bar g=\bar f \vee \overline{ f^{-1}}: M \rightarrow
M$. Applying Lefschetz-Dold formula, one gets

\[
\Lambda(\bar g^n)={\rm tr}(\bar g^n_{*_0})- {\rm tr}(\bar
g^n_{*1})+{\rm tr}(\bar g^n_{*2})-{\rm tr}(\bar g^n_{*3})
\]
where

\[
\Lambda(\bar g^n)=i(\bar g^n, *)+i(f^n, S)+i(f^{-n},S)\]

We shall assume first that \(f\) is orientation preserving. Then,
$i(f^n,S)=-i(f^{-n},S)$ and it follows that $\Lambda(\bar g^n)=1$.
On the other hand, one knows that ${\rm tr}(\bar g^n_{*_0})=1$,
that each space $H_i(M,{\mathbb Z})$, $1\leq i \leq 3$ may be
written $H_i(M,{\mathbb Z})= H_i(N_L, {\mathbb Z)}\oplus
H_i(N'_{L'}, {\mathbb Z)}$ and that each map $\bar g_{*i}$ may be
decomposed as $\bar g_{*i}= \bar f_{*i}\oplus \overline
{f^{-1}}_{*i}$. One deduces that
\[
-{\rm tr}(\bar g_{*1}^n)+{\rm tr}(\bar g_{*2}^n)=0,
\]
for every $n\geq 0$. By Proposition 1,  this implies that the non
vanishing eigenvalues of $\bar g_{*1}$ are equal to the non
vanishing eigenvalues of $\bar g_{*2}$, with the same
multiplicities. The spectral radius of  $\bar g_{*1}$ being $\leq
1$ because $h_{\rm top}(\bar g)=0$, we have a similar result for
$\bar g_{*2}$ and therefore a similar result for $\bar f_{*2}$.

In the case where $f$ is orientation reversing, we will get
\[
-{\rm tr}(\bar g_{*1}^n)+{\rm tr}(\bar g_{*2}^n)=0,
\]
for every even number $n$. But this will be sufficient to prove
that the non vanishing eigenvalues of $\bar g_{*1}$ are equal, up
to the sign, to the non vanishing eigenvalues of $\bar g_{*2}$.

\medskip

Is $S$ is an attractor of $f$, then it is a repeller of $f^{-1}$.
We construct filtration pairs $(N,L)$ and $(N', L')$ for $f$ and
$f^{-1}$ that satisfy Proposition 3. If $f$ is orientation
preserving, by looking at $\Lambda(\bar f)$ and $\Lambda(\overline
{f^{-1}})$ we get
\begin{eqnarray*}
1+i(f^n, S) & = & 2- {\rm tr}(\bar f^n_{*1})+{\rm tr}(\bar f^n_{*2}), \\
1+i(f^{-n}, S) & = & 1- {\rm tr}((\overline{f^{-1}}_{*1})^n)+{{\rm
tr}((\overline{f^{-1}}_{*2})^n)-1.}
\end{eqnarray*}
and
\[
1=\Lambda(\bar g^n)=2- {\rm tr}(\bar g^n_{*1})+{\rm tr}(\bar
g^n_{*2})-1
\]
and we deduce that
\[
-{\rm tr}((\bar g_{*1}^n)+{\rm tr}((\bar g_{*2}^n)=0.
\]
We can conclude as in the first case. If $f$ is orientation
reversing  we will get the equality for even integers and this
permits us to conclude. The case where $S$ is a repeller is
treated in the same way.

\medskip
Observe that we have proven the following:

\begin{Cor}
Let $U \subset {\mathbb R}^3$ be an open set and \(f:U \subset
{\mathbb R}^3 \rightarrow f(U) \subset {\mathbb R}^3\) be a
homeomorphism. Let $p$ be a fixed point of $f$ such that \(\{p\}\)
is an isolated invariant set.

\begin{itemize}

\item[-]  If $p$ is an attracting fixed point, the sequence \((i(f^n,p))_{n\geq 1}\) is constant equal to $1$;

\item[-]  If $p$ is a repulsing fixed point and $f$ preserves the orientation, the sequence \((i(f^n,p))_{n\geq 1}\) is constant equal to $-1$;

\item[-]  If $p$ is a repulsing fixed point and $f$ reverses the orientation, one as $i(f^n,p)=(-1)^{n+1}$;

\item[-]  if $p$ is neither an attracting fixed point, nor a repulsing fixed point, there exist roots of unity $\lambda_1,\dots,\lambda_r,\mu_1,\dots, \mu_s$ such that
$$i(f^n,p)=-\sum_{i=1}^r \lambda_i^n+\sum_{j=1}^s\mu_j^n.$$

\end{itemize}
\end{Cor}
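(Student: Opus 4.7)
The plan is to deduce each bullet by specialising the machinery of the proof of Theorem 2 to $S=\{p\}$ (for which $h_{\rm top}(f\vert_S)=0$ is automatic) and combining it with the elementary index identities recalled in \S1.1.

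First I would dispose of the attracting and repelling cases, which are not really new. If $p$ is attracting for $f$, then $p$ is attracting for every $f^n$ and the property $i(g,x)=1$ at an attracting fixed point gives $i(f^n,p)=1$ for all $n\ge1$. If $p$ is repelling for $f$, then $p$ is attracting for $f^{-1}$, hence $i(f^{-n},p)=1$ for every $n$. In dimension $m=3$ the involutions recalled in \S1.1 read $i(g^{-1},p)=-i(g,p)$ when $g$ preserves orientation and $i(g^{-1},p)=i(g,p)$ when $g$ reverses it; applied to $g=f^n$ together with the observation that $f^n$ preserves orientation if and only if either $f$ does or $n$ is even, this gives $i(f^n,p)=-1$ when $f$ preserves orientation, and $i(f^n,p)=(-1)^{n+1}$ when $f$ reverses orientation (value $-1$ for even $n$, value $+1$ for odd $n$).

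For the last item I would reuse directly the identity
\[
i(f^n,p)=-{\rm tr}(\bar f^n_{*1})+{\rm tr}(\bar f^n_{*2})
\]
established in the proof of Theorem 2 from Proposition 4 and the Lefschetz--Dold formula, together with the conclusion (obtained there via Manning's theorem on the ANR $N_L$ and the duality argument on the pointed union $N_L\vee N_{L'}$) that both $\bar f_{*1}$ and $\bar f_{*2}$ have spectral radius at most $1$. Since $N_L$ is a finite simplicial complex, $H_1(N_L,{\mathbb Z})$ and $H_2(N_L,{\mathbb Z})$ are finitely generated ${\mathbb Z}$-modules, and Proposition 2 then guarantees that every non-zero eigenvalue of each $\bar f_{*i}$ is a root of unity. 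Labelling these eigenvalues (with multiplicity) $\lambda_1,\dots,\lambda_r$ for $\bar f_{*1}$ and $\mu_1,\dots,\mu_s$ for $\bar f_{*2}$, the trace identity yields exactly
\[
i(f^n,p)=-\sum_{i=1}^r\lambda_i^n+\sum_{j=1}^s\mu_j^n.
\]

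No genuinely new obstacle arises: the statement is essentially a reading of the proof of Theorem 2, organised according to whether $\{p\}$ is an attractor, a repeller, or neither. The only point demanding some care is the parity/orientation bookkeeping in the repelling orientation-reversing case.
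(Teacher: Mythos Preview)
Your proposal is correct and follows the paper's own approach: the Corollary is stated immediately after the proof of Theorem~2 with the phrase ``Observe that we have proven the following,'' and your argument is precisely the intended reading---the fourth item is exactly the identity $i(f^n,p)=-{\rm tr}(\bar f^n_{*1})+{\rm tr}(\bar f^n_{*2})$ plus Proposition~2, while the attractor and repeller items come from the elementary facts of \S1.1 rather than from the filtration-pair machinery. The orientation bookkeeping in the repelling case is handled correctly.
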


\medskip
\medskip

{\bf Remark 1}. Let $U \subset {\mathbb R}^3$ be an open set,
\(f:U \subset {\mathbb R}^3 \rightarrow f(U) \subset {\mathbb
R}^3\) be an orientation preserving homeomorphism and \(K \subset
U\) be a continuum that is an isolated invariant set. Using the
same ideas as above we have that there exist
$\lambda_1,\dots,\lambda_r,\mu_1,\dots, \mu_s$ such that
$$i(f^n,K)=-\sum_{i=1}^r \lambda_i^n+\sum_{j=1}^s\mu_j^n.$$
Taking into account that
\(h_{\text{top}}(f|_K)=h_{\text{top}}(f^{-1}|_K)\) we have that

\[
\limsup_n \frac{\log |i(f^n,K)|}{n} \leq h_{\text{top}}(f|_K).
\]

\medskip
\medskip

{\bf 2.4. Local Stable/unstable sets.}

\medskip

Let $U \subset {\mathbb R}^3$ be an open set and let \(f:U \subset
{\mathbb R}^3 \rightarrow f(U) \subset {\mathbb R}^3\) an
orientation preserving homeomorphism. Let \(p\) be a fixed point
that  is an isolated invariant set and that is neither an
attractor nor a repeller. Let us consider a filtration pair
\((N,L)\) and let us define the {\em
local unstable set of} \(p\) {\em in} \({\rm cl} (N \setminus
L)\):
$$\Lambda_{{\rm cl} (N \setminus L)}^{-}(p)=\bigcap_{n\geq 0} f^{n}({\rm  cl}(N \setminus L)).$$
A point $x$ belong to $\Lambda_{{\rm cl} (N \setminus L)}^{-}(p)$
if and only if the sequence $(f^{-n}(x))_{n\geq 0}$ is well
defined and takes its values in  \({\rm cl} (N \setminus L)\).
Obviously the set $ \Lambda_{{\rm cl} (N \setminus L)}^{-}(p)$ is
backward invariant, which implies that $\bigcap_{n\geq 0} f^{-n}(
\medskip\Lambda_{{\rm cl} (N \setminus L)}^{-}(p))$ is invariant.
As ${\rm cl} (N \setminus L)$ is an isolating neighborhood of
$\{p\}$, the set $\bigcap_{n\geq 0} f^{-n}( \medskip\Lambda_{{\rm
cl} (N \setminus L)}^{-}(p))$ is reduced to the singleton $\{p\}$.
Therefore, every $x\in\Lambda_{{\rm cl} (N \setminus L)}^{-}(p)$
satisfies
$$\lim_{n\to\infty} f^{-n}(x)=p.$$

\medskip
We will prove the following:

\begin{Thm} In the situation of the above paragraph, suppose in addition that \(i(f^q,p)=r> 0 \) , where $q$  is the period of the sequence
\((i(f^n,p))_{n\geq1}\).

\begin{itemize}

\item[a)]  The second
Alexander-$\Check{C}$ech cohomology group with compact supports
\(\Check{H}^2_c(\Lambda_{{\rm cl}(N \setminus L)}^{-}(p) \setminus
L,{\mathbb Z})\) contains at least \(r\) copies of \({\mathbb
Z}\).

\item[b)] If \ \(N_L\) can be embedded in \({\mathbb
R}^3\) then the image of \((\Lambda_{{\rm cl}(N \setminus
L)}^{-}(p) \cup L)/L\) by this embedding decomposes \({\mathbb
R}^3\) into at least \(r+1\) components.

\item[c)] The set \(\Lambda_{{\rm cl}(N \setminus
L)}^{-}(p) \setminus L\) decomposes \( {\rm int}(N) \setminus L\)
into at least \(r+1 - {\rm dim} \,H_1({\rm cl}(N \setminus L),
{\mathbb Z})\) components. In particular, if \({\rm cl}(N
\setminus L)\) is a closed ball, \(\Lambda_{{\rm cl}(N \setminus
L)}^{-}(p) \setminus L\) decomposes \({\rm int}(N) \setminus L\)
into at least \(r+1\) components.

\end{itemize}

\end{Thm}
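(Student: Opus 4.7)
The plan is to derive all three statements from a single cohomological computation in $N_L$ and then feed it into Alexander duality in three different ambient spaces. The main object is the compact set
\[
\Lambda \;:=\; \bigl(\Lambda^{-}_{{\rm cl}(N\setminus L)}(p)\cup L\bigr)/L \;\subset\; N_L,
\]
which I would identify with the nested intersection $\Lambda=\bigcap_{n\geq 0}\bar f^{\,n}(N_L)$: unwinding the definition of $\bar f$, a point of $N_L\setminus\{[L]\}$ lies in $\bar f^{\,n}(N_L)$ iff its first $n$ $f$-preimages stay in $N\setminus L$, which matches the definition of the local unstable set. Continuity of \v{C}ech cohomology on decreasing intersections of compacta then gives
\[
\check{H}^i(\Lambda;{\mathbb Z})\;=\;\varinjlim_{n}\check{H}^i\bigl(\bar f^{\,n}(N_L);{\mathbb Z}\bigr),
\]
and using that $\bar f$ factors as $N_L\to\bar f(N_L)\hookrightarrow N_L$, this direct limit is canonically identified with the eventual image $V_i:=\bigcap_n \mathrm{Im}(\bar f^{\,*n}_{*i})$ of $\bar f^{*}_{*i}$ on $H^i(N_L;{\mathbb Z})$.

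Next I would invoke the output of the proof of Theorem 2: every non-zero eigenvalue of $\bar f_{*i}$ is a root of unity. Taking $q$ to be the period of $(i(f^n,p))_{n\geq 1}$ (necessarily a common multiple of the orders of these roots), $(\bar f_{*i})^{q}$ acts as the identity on $V_i\otimes{\mathbb Q}$, whence
\[
{\rm tr}\bigl(\bar f^{\,q}_{*i}\bigr)\;=\;\dim_{\mathbb Q} V_i\otimes{\mathbb Q}\;=\;\mathrm{rank}\,\check{H}^i(\Lambda;{\mathbb Z}).
\]
Combining this with the identity $i(f^q,p)=-{\rm tr}(\bar f^{\,q}_{*1})+{\rm tr}(\bar f^{\,q}_{*2})$ already proved in Section 2.3, and using that ${\rm tr}(\bar f^{\,q}_{*1})\geq 0$ (as a sum of $q$-th powers of roots of unity which are all $1$ by the choice of $q$), I obtain $\mathrm{rank}\,\check{H}^2(\Lambda;{\mathbb Z})\geq r$. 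Because $[L]$ is an attracting fixed point of $\bar f$, it admits arbitrarily small contractible neighborhoods in $N_L$, so excision yields $\check{H}^2_c(\Lambda\setminus\{[L]\};{\mathbb Z})\cong\check{H}^2(\Lambda;{\mathbb Z})$, proving (a).

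For (b), after embedding $N_L\hookrightarrow{\mathbb R}^3\subset S^3$, classical Alexander duality gives $\tilde H_0(S^3\setminus\Lambda;{\mathbb Z})\cong\check{H}^2(\Lambda;{\mathbb Z})$, so the image of $\Lambda$ decomposes $S^3$, hence ${\mathbb R}^3$, into at least $r+1$ connected components. For (c), I would view $\Lambda':=\Lambda^{-}_{{\rm cl}(N\setminus L)}(p)\setminus L$ as a compact subset of the open $3$-manifold $V:={\rm int}(N)\setminus L$ and apply Alexander duality inside $V$:
\[
\check{H}^2(\Lambda';{\mathbb Z})\;\cong\;H_1(V,V\setminus\Lambda';{\mathbb Z}).
\]
The long exact sequence of the pair $(V,V\setminus\Lambda')$ then yields $\mathrm{rank}\,H_0(V\setminus\Lambda')\geq r+1-\mathrm{rank}\,H_1(V)$, and since $V$ is the manifold interior of the compact $3$-manifold ${\rm cl}(N\setminus L)$ one has $H_1(V;{\mathbb Z})\cong H_1({\rm cl}(N\setminus L);{\mathbb Z})$, delivering the stated inequality; when ${\rm cl}(N\setminus L)$ is a topological ball this last group vanishes and one gets exactly $r+1$ components. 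The main obstacle I expect is making the identification in the second paragraph rigorous: the direct limit $\varinjlim\check{H}^i(\bar f^{\,n}(N_L))$ must be shown to coincide with the eventual image of $\bar f^{*}$ on $H^i(N_L)$ and to carry a compatible $\bar f$-action, despite $\bar f$ not being a homeomorphism of $N_L$; this requires a careful argument exploiting the factorization of $\bar f$ through $\bar f(N_L)$ together with the compactness of $\bar f^{\,n}(N_L)$.
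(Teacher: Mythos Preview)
Your overall strategy coincides with the paper's: identify $\Lambda$ with $\bigcap_{n\geq 0}\bar f^{\,n}(N_L)$, compute its \v{C}ech cohomology as a direct limit over the system with bonding map $\bar f$, use the eigenvalue analysis from Theorem~2 to bound the rank of $\check H^2(\Lambda)$, and then feed this into Alexander duality in the three ambient spaces. The paper phrases the limit computation via the inverse system $N_L\xleftarrow{\bar f}N_L\xleftarrow{\bar f}\cdots$ and shape-theoretic pro-groups, which cleanly sidesteps the obstacle you flag, but the content is the same.

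There is, however, a genuine gap in your eigenvalue step. You assert that the period $q$ of $(i(f^n,p))_{n\geq 1}$ is ``necessarily a common multiple of the orders'' of the nonzero eigenvalues of $\bar f_{*1}$ and $\bar f_{*2}$; this is false. For instance, if the nonzero eigenvalues of $\bar f_{*1}$ are $\{i,-i\}$ and those of $\bar f_{*2}$ are $\{i,-i,1\}$, then $i(f^n,p)=1$ for all $n$, so $q=1$, while the eigenvalues have order~$4$. Here $(\bar f_{*2})^q$ does \emph{not} act as the identity on $V_2\otimes\mathbb{Q}$, and your identity ${\rm tr}(\bar f^{\,q}_{*2})=\dim_{\mathbb Q}(V_2\otimes\mathbb{Q})$ fails ($1\neq 3$). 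The repair is immediate: let $Q$ be the least common multiple of the orders of all the eigenvalues, so that $q\mid Q$, and use periodicity to write
\[
r=i(f^q,p)=i(f^Q,p)=-{\rm rank}\,V_1+{\rm rank}\,V_2\leq{\rm rank}\,V_2={\rm rank}\,\check H^2(\Lambda;{\mathbb Z}).
\]
(The paper's ``WLOG $q=1$'' should be read as this replacement of $f$ by $f^Q$.) One further slip: in (c), the set $\Lambda'=\Lambda^{-}_{{\rm cl}(N\setminus L)}(p)\setminus L$ is closed in $V$ but not compact in general (indeed $\Lambda^{-}$ meets $L$, since $[L]\in\Lambda$); your duality statement is correct once you write $\check H^2_c(\Lambda';{\mathbb Z})$ in place of $\check H^2(\Lambda';{\mathbb Z})$, which is precisely the group computed in~(a).
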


\begin{proof}

The proof will involve shape theory arguments. More precisely, the
homology and cohomology theory that is a shape invariant. It is
more appropriate, to study spaces that can have bad local
behavior, than the singular homology theory. We recommend the book
of Marde\(\Check{\text{s}}\)i\(\Acute{\text{c}}\) and Segal
(\cite{MS}) for information about the theory of shape.

We keep the same notations as in the previous sections. The set
$${\rm Inv}^{-}(N_L, \bar{f})=\bigcap_{n \geq 0}
\bar{f}^{n}(N_L)$$is a continuum, being the intersection of a
decreasing sequence of continua (recall that $N$ is connected and
$L$ not empty). Observe that it is the image of $\Lambda_{{\rm
cl}(N \setminus L)}^{-}(p)$ by the projection $\pi:N\to N_L$.
Indeed if $x\in\Lambda_{{\rm cl}(N \setminus L)}^{-}(p)$ then for
every $n\geq 0$, the point $x$ has a pre-image $f^{-n}(x)$ which is
in $N$ and one has $\bar f^n(\pi(f^{-n}(x)))=\pi(x)$. Conversely,
suppose that $\xi \in ({\rm Inv}^{-}(N_L, \bar{f})$  is not equal
to $[L]$ and equal to the projection of $x\in N\setminus L$. A
point $\eta$ such that $\bar f^n(\eta)=\xi$ is not equal to $[L]$.
Such a point is unique, it must be the projection of $f^{-n}(x)$.
We have proven that
$${\rm Inv}^{-}(N_L, \bar{f})=[L]\cup\pi\left ( \Lambda_{{\rm cl}(N \setminus L)}^{-}(p)\right).$$
As we know that the set on the left is connected and both sets on
the right are compact, we deduce that $[L]\in\pi\left (
\Lambda_{{\rm cl}(N \setminus L)}^{-}(p)\right)$. In other words,
${\rm Inv}^{-}(N_L, \bar{f})$ is the Alexandroff's
compactification of \(\Lambda_{{\rm cl}(N\setminus L)}^{-}(p)
\setminus L\) .

\medskip
 Without loss of generality we
can assume that \(q=1\). In this case we have that all the
eigenvalues of $\bar f_{*1}$ and $\bar f_{*2}$ are equal to 1.
Since the Lefschetz number can be computed also in terms of the
cohomology groups the same statement holds for
$$\bar f^{*}_1:H^1(N_L,{\mathbb Z})\to H^1(N_L,{\mathbb Z})$$ and
$$\bar f^{*}_2:H^2(N_L,{\mathbb Z})\to H^2(N_L,{\mathbb Z}).$$

The set  \({\rm Inv} ^{-}(N_L, \bar{f})\) is the inverse limit of
the sequence
\[
\dots N_L \overset{\bar{f}}{\rightarrow} N_L
\overset{\bar{f}}{\rightarrow} N_L \overset{\bar{f}}{\rightarrow}
N_L \dots
\]
Since \(i(f,p)=r > 0 \), there are at least \(r\) eigenvalues of
$\bar f^{*}_2$ that are equal to 1. As a consequence, the
(co)pro-group associated to the above inverse system
\[
\dots H^2(N_L,{\mathbb Z}) \overset{\bar f_2^*}{\rightarrow}
H^2(N_L,{\mathbb Z}) \overset{\bar f_2^*}{\rightarrow}
H^2(N_L,{\mathbb Z}) \overset{\bar f_2^*}{\rightarrow}
H^2(N_L,{\mathbb Z}) \dots
\]
is nontrivial and the Alexander-\(\Check{C}\)ech cohomology groups
\[
\Check{H}^2({\rm Inv}^{-}(N_L, \bar{f}),{\mathbb Z})
=\Check{H}^2((\Lambda_{{\rm cl}(N \setminus L)}^{-}(p) \cup
L)/L,{\mathbb Z})= \Check{H}^2(\Lambda_{{\rm cl}(N \setminus
L)}^{-}(p) \cup L,L,{\mathbb Z})
\]
\noindent contain at least \(r\) copies of \({\mathbb Z}\).

As we know that
\[
\Check{H}^2(\Lambda_{{\rm cl}(N \setminus L)}^{-}(p) \cup
L,L,{\mathbb Z})= \Check{H}^2_c(\Lambda_{{\rm cl}(N \setminus
L)}^{-}(p) \setminus L,{\mathbb Z}).
\]
(see \cite{Sp}, page 321, Lemma 11) we deduce that
\(\Check{H}^2_c(\Lambda_{{\rm cl}(N \setminus L)}^{-}(p) \setminus
L,{\mathbb Z})\) contains at least \(r\) copies of \({\mathbb
Z}\). This proves a).

In order to check b) let us consider an embedding $\iota:
N_L\to{\mathbb R}^3$. We know that
 $$\tilde{H}_0({\mathbb R}^3 \setminus \iota((\Lambda_{{\rm cl}(N \setminus
L)}^{-}(p) \cup L)/L), {\mathbb Z}) =
\Check{H}^2(\iota((\Lambda_{{\rm cl}(N \setminus L)}^{-}(p) \cup
L)/L),{\mathbb Z})$$ (see \cite{Sp}, page 296, Theorem 10). The
fact that  \(\Check{H}^2((\Lambda_{{\rm cl}(N \setminus L)}^{-}(p)
\cup L)/L),{\mathbb Z})\) contains \(r\) copies of \(\mathbb Z\)
implies that
 \(\iota((\Lambda_{cl(N \setminus L)}^{-}(p) \cup L)/L)\) decomposes
\({\mathbb R}^3\) into at least \(r+1\) components.

Now we are going to prove c). Using Theorem 10 in page 342 of
\cite{Sp}, we have that
\[
H_1({\rm int}(N) \setminus L, ({\rm int}(N) \setminus L) \setminus
(\Lambda_{{\rm cl}(N \setminus L)}^{-}(p) \setminus L), {\mathbb
Z}) = \Check{H}^2_c((\Lambda_{{\rm cl}(N \setminus L)}^{-}(p)
\setminus L),{\mathbb Z}).\]

Then, from a) and the exactness of the homology sequence of the
pair \(({\rm int}(N) \setminus L, ({\rm int}(N) \setminus L)
\setminus (\Lambda_{{\rm cl}(N \setminus L)}^{-}(p) \setminus
L))\), if follows that
\[
{\rm dim}\, H_0( {\rm int}(N) \setminus L) \setminus
(\Lambda_{{\rm cl}(N \setminus L)}^{-}(p) \setminus L), {\mathbb
Z}) \geq r+1 -{\rm dim}\,H_1({\rm cl}(N \setminus L), \mathbb Z).
\]

\end{proof}

\medskip
\medskip

{\bf Remark 2.} Note that using shape theory arguments it is easy
to check that the covering dimension \({\rm dim}( {\rm
Inv}^{-}(N_L, \bar{f})) \geq 2\). Indeed, \({\rm pro}-H_2({\rm
Inv}^{-}(N_L, \bar{f}),\mathbb Z)\) can be also computed in terms
of the \(\Check{C}\)ech expansion of \({\rm Inv}^{-}(N_L,
\bar{f})\), it follows that there is a cofinal family of open
coverings of \({\rm Inv}^{-}(N_L, \bar{f})\)  whose nerves are
polyhedra of dimension \(\geq2\). Then, there is a cofinal family
of open coverings of \({\rm Inv}^{-}(N_L, \bar{f})\)  of order
\(\geq 3\). Then, the covering dimension \({\rm dim}( {\rm
Inv}^{-}(N_L, \bar{f})) \geq 2\).

\medskip
\medskip

{\bf Remark 3.} If in the above theorem \(f\) is orientation
preserving and \(i(f^q,p)=r< 0 \) we obtain a similar result for
the local unstable set of \(\{p\}\) associated to \(f^{-1}\).

\medskip
\medskip

\centerline{{\bf 3. Proof of Theorem 3.}}

\medskip
\medskip

We have proved that the sequence $(i(f^n,p))_{n \geq 1}$  is periodic. Let us answer the
converse question, i.e., for any periodic sequence of integers
$I=(I_n)_{n \geq 1}$, which satisfies Dold's  necessary congruences, there
exists a homeomorphism with $i(f^n,p)=I_n$ for every $n\geq 1$. Moreover, we can
construct the homeomorphism $f:{\Bbb R}^3 \rightarrow {\Bbb R}^3$
to be  orientation preserving with no compact invariant sets but
\(\{p\}\).

There are different ways to define Dold's congruences. The simplest one uses the  normalized periodic sequences $\sigma^k=(\sigma^k_n)_{n\geq 1}$, where for each $k\geq 1$, one has
\[ \sigma^k_n=\left\{\begin{array}{ll}
k & \text{ if } n \in k{\Bbb N}, \\
0 & \text{ if } n \notin k{\Bbb N} \end{array} \right.
\]
Any sequence $I=(I_n)_{n \geq 1}$ may be written formally $I=\sum_{k\geq 1} a_k\sigma^k$ in an unique way.
It satisfies Dold's congruences if and only if the $a_k$ are integers. As proved by Dold (\cite {Do}),
a sequence of integers $I=(I_n)_{n \geq 1}$ satisfies  Dold's congruences
if and only if there exists an ENR $X$, a continuous map $f:X \rightarrow X$ and an open subset
$U \subset X$ such that for every $n\geq 1$, the set ${\rm Fix}(f^n) \cap U_n$ is
compact and $I_n=i_{X}(f^n, U_n)$, the sequence $(U_n)_{n\geq 1}$ being inductively defined by  $U_1=U$ and
$U_n=f^{-1}(U_{n-1}) \cap U$. As observed by  Babenko and Bogatyi
(see \cite{BB} or \cite{JM}), a bounded sequence that satisfies Dold's congruences must be periodic.
That means that the sum  $I=\sum_{k\geq 1} a_k\sigma^k$ is finite.

\medskip
To prove Theorem 3 we will find a map in the class of {\it radial}
homeomorphisms (see \cite{P}, § 8) . The $3$-dimensional sphere is
nothing but the end compactification of $S^2\times\mathbb R$ where
one adds to $S^2\times\mathbb R$ the lower end $e^-$, adherent to
$S^2\times(-\infty,0]$, and the upper end $e^+$, adherent to
$S^2\times[0,+\infty)$. Let us fix a homeomorphism $g$ of $S^2$
and a continuous map $\varphi:S^2\to\mathbb R$. The skew product
$$f_{g,\varphi}:(x,y)\mapsto (g(x), y+\varphi(x))$$ induces on $S^3$ a homeomorphism fixing the two ends.
If one wants $\{e^-\}$ to be locally maximal it is sufficient to suppose that the following property (P) is satisfied:

\smallskip
 (P)\enskip\enskip for every $x\in S^2$, $\varphi(x)\geq 0\Rightarrow \varphi(g(x))>0$.
 \smallskip

\noindent  Indeed the property (P) implies that there exists $\varepsilon >0$ such that for every $x\in S^2$, $\varphi(x)\geq -\varepsilon\Rightarrow \varphi(g(x))\geq \varepsilon$. Writing
$$f_{g,\varphi}^{-1}:(x,y)\mapsto (g^{-1}(x), y-\varphi(g^{-1}(x))),$$ one deduces  that  every orbit $(x_k,y_k)_{k\in{\Bbb Z}}$ of $f_{g,\varphi}$ satisfies at least one of the following properties:

\smallskip
\noindent-\enskip $y_{k+1}-y_k\geq\varepsilon$ for every $k\geq 0$

\smallskip
\noindent or

\smallskip
\noindent-\enskip $y_{k-1}-y_k\geq \varepsilon$ for every $k\leq 0$.

\smallskip
\noindent This implies that either $\lim_{k\to +\infty} y_k=+\infty$ or $\lim_{k\to -\infty} y_k=+\infty$.
This also implies that each ball $(S^2\times(-\infty,r])\cup\{e^-\}$ is an isolating block.

\medskip

Write $X_0=\varphi^{-1}([0,+\infty[)$. The following points are
easy to state:

\smallskip
\noindent-\enskip the exit set of
$N=(S^2\times(-\infty,0])\cup\{e^-\}$ is $N^-=\{(x,y)\,\vert\,x\in
X_0,\,-\varphi(x)\leq y\leq 0\}$;

\smallskip
\noindent-\enskip the unstable set of $e^-$ is $X^-\times{\Bbb R}$ where $X^-=\bigcap_{k\geq 0} g^k(X_0)$, it is the infinite cone over $X^-$;

\smallskip
\noindent-\enskip the stable set of $e^-$ is $X^+\times{\Bbb R}$ where $X^+=\bigcap_{k\leq 0} g^k(S^2\setminus X_0)$;

\smallskip
\noindent-\enskip one obtains a filtration pair $(N,L)$ writing
$L=\{(x,y)\,\vert\,x\in X,\,-\psi(x)\leq y\leq 0\}$ where
$X\subset S$ is a subsurface with boundary, neighborhood of $X_0$
satisfying $g(X)\subset {\rm int }(X_0)$, and
$\psi:X\to[0,\infty)$ a continuous function vanishing exactly on
$\partial X$ and satisfying $\psi(g(x))>\psi(x)-\varphi(x)>0$ for
every $x\in X$.

\medskip
One can compute the Lefschetz index $i(f_{g,\varphi}^n, e^-)$ by
looking at the induced maps on the spaces $H_i(N,L,{\Bbb Z})$. We
will give an alternative way, supposing for convenience that $X_0$
itself is a subsurface with boundary.

\begin{Prop}
For every $n\geq 1$, one has $i(f_{g,\varphi}^n, e^-)=1-\Lambda((g\vert_{X_0})^n)$, where
$$\Lambda((g\vert_{X_0})^n)={\rm tr} ((g\vert_{X_0})_{*0})^n- {\rm tr} ((g\vert_{X_0})_{*1})^n$$ is the Lefschetz number of the $n$-th iterate of $g\vert_{X_0}:X_0\to X_0$.
\end{Prop}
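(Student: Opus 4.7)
The plan breaks into three steps. First, the Lefschetz-Dold formula applied to $\bar f_{g,\varphi}^{\,n}$ on $N_L$, together with the fact that the only fixed points of this iterate are the attractor $[L]$ (of index $1$) and $e^-$, yields
\[
\Lambda(\bar f_{g,\varphi}^{\,n}) \;=\; 1 + i(f_{g,\varphi}^{\,n}, e^-).
\]
So it suffices to prove that $\Lambda(\bar f_{g,\varphi}^{\,n}) = 2 - \Lambda((g\vert_{X_0})^n)$.

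Second, I compute the homology $H_*(N_L,\mathbb{Z})$. The set $N = (S^2 \times (-\infty, 0]) \cup \{e^-\}$ is contractible, being the topological cone on $S^2$ with apex $e^-$. The set $L = \{(x,y) : x \in X,\, -\psi(x) \le y \le 0\}$ deformation retracts onto $X \times \{0\}$ via $(x, y) \mapsto (x, (1-s)y)$, $s \in [0,1]$, and further onto $X_0 \times \{0\} \cong X_0$, since $X$ is chosen as a subsurface neighborhood of $X_0$. The long exact sequence of the pair $(N, L)$, combined with contractibility of $N$, then furnishes natural isomorphisms via the connecting homomorphism
\[
\tilde H_i(N_L,\mathbb{Z}) \;\cong\; H_i(N, L,\mathbb{Z}) \;\xrightarrow{\partial}\; \tilde H_{i-1}(L,\mathbb{Z}) \;\cong\; \tilde H_{i-1}(X_0,\mathbb{Z}), \qquad i \ge 1,
\]
with $H_0(N_L) = \mathbb{Z}$. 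Since $X_0 \subset S^2$ is a subsurface with boundary, $H_2(X_0) = 0$ and so $H_3(N_L) = 0$.

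The crucial step is identifying, under these isomorphisms, the induced map $\bar f_{g,\varphi,\,*i}$ on $\tilde H_i(N_L)$ with $(g\vert_{X_0})_{*,\,i-1}$ on $\tilde H_{i-1}(X_0)$. Although $f_{g,\varphi}$ does not preserve $L$ pointwise, the filtration pair properties imply that $f_{g,\varphi}|_N$ is a map of pairs $(N, L) \to (S^3, L \cup (S^3 \setminus N))$, and the natural identification $S^3/(L \cup (S^3 \setminus N)) \cong N_L$ recovers $\bar f_{g,\varphi}$. Naturality of the connecting homomorphism applied to this map of pairs reduces the computation to $(f_{g,\varphi}|_L)_*$ on $\tilde H_{i-1}(L) \cong \tilde H_{i-1}(X_0)$. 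Since generators may be represented by cycles in $X \times \{0\} \subset L$, where $f_{g,\varphi}$ acts as $(x, 0) \mapsto (g(x), \varphi(x))$, and since the vertical translation by $\varphi$ is nullhomotopic in the contractible $y$-direction, retracting back to $X_0 \times \{0\}$ identifies this action with $(g\vert_{X_0})_{*,\,i-1}$. I expect this naturality bookkeeping to be the main technical obstacle.

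Finally, the Lefschetz-Dold formula and the identifications above give
\[
\Lambda(\bar f_{g,\varphi}^{\,n}) \;=\; 1 + \sum_{i \ge 1} (-1)^i \operatorname{tr}\bigl((g\vert_{X_0})_{*,\,i-1}^{\,n}\bigr) \;=\; 1 - \bigl(\Lambda((g\vert_{X_0})^n) - 1\bigr) \;=\; 2 - \Lambda((g\vert_{X_0})^n),
\]
where the trace is on $\tilde H_{i-1}(X_0)$ and we used that the trace on $\tilde H_0$ equals the trace on $H_0$ minus $1$. Subtracting $1$ yields $i(f_{g,\varphi}^{\,n}, e^-) = 1 - \Lambda((g\vert_{X_0})^n)$, completing the argument.
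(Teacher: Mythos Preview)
Your overall strategy---computing $H_*(N_L)$ from the long exact sequence of $(N,L)$ using the contractibility of $N$, then identifying $\bar f_{*i}$ with $(g\vert_{X_0})_{*,i-1}$---is sound and gives the result, but the specific justification for the identification has a gap. The quotient $S^3/(L\cup(S^3\setminus N))$ is \emph{not} naturally identified with $N_L$: since $\partial N\not\subset L$ (only $X\times\{0\}\subset\partial N$ lies in $L$), points of $(\partial N)\setminus L$ become limit points of the collapsed point in that quotient, which is therefore not Hausdorff, and excision from $(S^3,L\cup(S^3\setminus N))$ down to $(N,L)$ fails for the same reason. The clean repair is to avoid leaving $N$: the vertically truncated map $f'(x,y)=(g(x),\min(y+\varphi(x),0))$ sends the pair $(N,L)$ to itself (use the inequality $\psi(g(x))>\psi(x)-\varphi(x)$ from the construction of $L$ to check $f'(L)\subset L$), induces exactly $\bar f$ on $N_L$, and satisfies $r\circ f'\vert_L=(g\vert_X)\circ r$ for the retraction $r:(x,y)\mapsto(x,0)$. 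Naturality of $\partial$ for this $f'$ then gives your identification directly, and your final Lefschetz computation is correct.

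The paper's route is different and shorter. It enlarges $N$ to $N'=N\cup N^+$, where $N^+=\{(x,y):x\in X_0,\,0\le y\le\varphi(x)\}$, and defines on $N'$ a map $\overline f_{g,\varphi}$ equal to $f_{g,\varphi}$ outside $N^-\cup N^+$ and equal to $(x,y)\mapsto(g(x),0)$ on $N^-\cup N^+$. Then $\Lambda(\overline f_{g,\varphi}^{\,n})=1$; the set $N^-\cup N^+$ is attracting and the retraction $r:(x,y)\mapsto(x,0)$ onto $X_0$ satisfies $r\circ\overline f_{g,\varphi}=g\vert_{X_0}\circ r$, so that block contributes $\Lambda((g\vert_{X_0})^n)$; the only other fixed point is $e^-$. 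This yields $1=i(f_{g,\varphi}^n,e^-)+\Lambda((g\vert_{X_0})^n)$ in one line. Your approach is a direct Conley-index homology computation; the paper's trick of filling in to a ball with an attracting block trades that bookkeeping for a simple Lefschetz count on a contractible space.
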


\begin{proof} Write $N^+=\{(x,y)\,\vert\,x\in X_0, \,0\leq y\leq \varphi(x)\}$ and observe that $N'=N\cup N^+$ is a $3$-dimensional sphere.
One constructs a continuous map $\overline f_{g,\varphi}: N'\to N'$ satisfying
\[ \overline f_{g,\varphi}(x)=\left\{\begin{array}{ll}
(g(x),0) & \text{ if } x\in N^-\cup N^+, \\
f_{g,\varphi}(x)  & \text{ if } x \notin N^-\cup N^+.\end{array} \right.
\]
Observe that both sets $N'$ and $N^-\cup N^+$ are attracting sets of $\overline f_{g,\varphi}$ and that $r:(x,y)\mapsto (x,0)$  is a strong deformation retraction from $N^-\cup N^+$ to $X_0$ that satisfies $r\circ\overline f_{g,\varphi}= g\vert_{X_0}\circ r$. This implies that
\begin{eqnarray*}
1 & = & \Lambda((\overline f_{g,\varphi})^n) \\
 & = & i(f_{g,\varphi}^n, e^-)+ \Lambda((\overline f_{g,\varphi}\vert_{N\cup N'})^n)\\
& =&i(f_{g,\varphi}^n, e^-)+ \Lambda((g\vert_{X_0})^n).
\end{eqnarray*}
 \end{proof}

 \medskip
Now we can prove Theorem 3.
 Let $I=(1+a_1)\sigma^1+\sum_{2\leq k\leq k_0}a_k \sigma^k$ be a periodic sequence that satisfies Dold's congruences.
 Write $$A^-=\{k\geq 1\,\vert\, a_k<0\},\enskip A^+=\{k\geq 1\,\vert \,a_k>0\},$$ and define
 $$a^-=\sum_{k\in A^-}a_k, \,a^+=\sum_{k\in A^+}a_k.$$Choose $-a^-$ closed disks $(Z^k_i)_{k\in A^-,\,1\leq i\leq-a_k}$ and $a^+$ closed
 annulus $(Z^k_i)_{k\in A^+,\,1\leq i\leq a_k}$ on $S^2$, all pairwise disjoint. One can easily construct an orientation preserving
 homeomorphism $g$ of $S^2$ that satisfies
 the following properties :

  \smallskip
 \noindent-\enskip for each $k\in A^-$ and each $i\in\{1,\dots,-a_k\}$ there exist $k$ disjoint closed disks  $D^k_{i,j}$, $j\in\mathbb Z/k\mathbb Z$,
 in the interior of $Z^k_i$ such that $g(D^k_{i,j})\subset{\rm int}(D^k_{i,j+1})$ and such that the maximal invariant set contained
 in $X^k_i=\bigcup_{j\in\mathbb Z/k\mathbb Z}D^k_{i,j}$ is a periodic orbit of period $k$;

 \smallskip
  \noindent-\enskip for each  $k\in A^+$  and each $i\in\{1,\dots,a_k\}$ there exist $k$ disjoint closed disks  $D^k_{i,j}$,
  $j\in\mathbb Z/k\mathbb Z$, in the interior of $Z^k_i$ such that $g^{-1}(D^k_{i,j})\subset{\rm int}(D^k_{i,j+1})$, such that
  $X^k_i=Z^k_i\setminus\bigcup_{j\in\mathbb Z/(k)\mathbb Z}{\rm int}(D^k_{i,j})$ is an attracting set of $g$ and such that the maximal
  invariant set contained in $X^k_i$ is a wedge of $k+1$ circles.

Choose now a continuous function $\varphi: S^2\to \mathbb R$ that is positive in the interior of
$X_0=  \bigcup_{1\leq k\leq k_0,\,1\leq i\leq\vert a_k\vert}X^k_i$ and negative outside $X_0$. Observe that
\[ \Lambda((g\vert_{X_i^k})^n)=\left\{\begin{array}{ll}
k & \text{ if } n \in k{\Bbb N}, \\
0 & \text{ if } n \notin k{\Bbb N} \end{array} \right.
\]
if $k\in A^-$, and
\[ \Lambda((g\vert_{X_i^k})^n)=\left\{\begin{array}{ll}
-k & \text{ if } n \in k{\Bbb N}, \\
0 & \text{ if } n \notin k{\Bbb N} \end{array} \right.
\]
if $k\in A^+$. By Proposition 5, one deduces that the sequence
$J=(i(f_{g,\varphi}^n, e^-))_{n\geq 1}$ satisfies
 \begin{eqnarray*}
J&=&\sigma^1-\sum_{k\in A^-}\sum_{1\leq i\leq -a_k} \sigma^k-\sum_{k\in A^+}\sum_{1\leq i\leq a_k} (-\sigma^k) \\
 &=&\sigma^1+\sum_{k\in A^-}a_k\sigma^k+\sum_{k\in A^+}a_k\sigma^k\\
 &=&I
\end{eqnarray*}
\(\square\).
\medskip
\medskip

{\bf Remark 4.} \enskip Of course, a given sequence $I$
corresponds to infinitely many such constructions. In the
construction above, the sequence $I^0=\sigma^1$ corresponds to a
map $f_{g,\varphi}$ such that $\varphi<0$ (the ball $N$ being an
attracting set), the sequence $I^1=-\sigma^1$ to a map
$f_{g,\varphi}$ such that $X_0$ is the union of two attracting
disks (an explicit example being given by the hyperbolic linear
map $l:(x_1,x_2,x_3)\mapsto (x_1/2, x_2/2, 2 x_3)$), the sequence
$I^2=\sigma^1-\sigma^2$ to a map $f_{g,\varphi}$ such that $X_0$
is the union of two permuting disks (an explicit example being
given by $l':(x_1,x_2,x_3)\mapsto (x_1/2,x_1/2, -2 x_3)$). Observe
that $I^0$ also corresponds to a map $f_{g,\varphi}$ such that
$X_0$ is an attracting annulus (an explicit example being given by
$l^{-1}$)  and that $I^1$ also corresponds to a map
$f_{g,\varphi}$ such that $\varphi>0$ (the ball $N$ being a
repulsing set).

\medskip
{\bf Remark 5.} \enskip Applying the previous proposition to
$f_{g,\varphi}^{-1}:(x,y)\mapsto (g^{-1}(x),
y-\varphi(g^{-1}(x)))$, one knows that $i(f_{g,\varphi}^{-n},
e^-)=1-\Lambda((g^{-1}\vert_{S^2\setminus g(X_0})^n)$ for every
$n\geq 1$. As we are on a $2$-dimensional sphere and as we know
that $g(X_0)\subset X_0$ we deduce that
$$\Lambda((g^{-1}\vert_{S^2\setminus g(X_0)})^n)=i(g^{-n},
X^+)=i(g^n, X^+),$$ which implies that
$$i(f_{g,\varphi}^{-n}, e^-)+(f_{g,\varphi}^n, e^-)= 2-i(g^n, X^-)-i(g^{n}, X^+)=0$$ by Poincar\'e-Hopf formula.

\medskip
{\bf Remark 6.} \enskip All the constructions above may be done in
higher dimension. Radial homeomorphisms may be constructed
similarly  and Proposition 5 may be generalized. Replacing all the
disks by balls and the annuli by filled tori in the construction
above, one gets a generalization of Theorem 3 in any dimension.
These constructions permit us to understand why Theorem 1 is not
true in higher dimension. Let consider the following Anosov
diffeomorphism
$$A:(x_1,x_2)\mapsto(2x_1+x_2,x_1+x_2)$$ on  ${\Bbb R}^2/{\Bbb Z}^2$ and the map
$$g_0: (x_1,x_2,x_3)\mapsto( A(x_1,x_2), x_3/2)$$defined on $({\Bbb R}^2/{\Bbb Z}^2)\times[-1,1]$.
Using Hopf's fibration, one easily constructs a homeomorphism $g$ on $S^3$ that admits an attracting manifold $M$ homeomorphic
to $({\Bbb R}^2/{\Bbb Z}^2)\times[-1,1]$ such hat $g\vert_M$ is conjugated to $g_0$. Let consider a function $\varphi: S^3\to{\Bbb R}$ positive
on the interior of $M$ and negative outside $M$. The unstable manifold of $e^-$ is an infinite cone over ${\Bbb R}^2/{\Bbb Z}^2$ (like in \cite{P})
and one knows that for every $n\geq 1$
$$i(f_{g,\varphi}^n, e^-)=1-L((g\vert_{M})^n)=1-L(A^n)=-1+\left(3+\sqrt 5\over 2\right)^n+\left(3-\sqrt 5\over 2\right)^n,$$
which implies that $\lim_{n\to +\infty}i(f_{g,\varphi}^n, e^-)=+\infty$.

\medskip
\medskip

Patrice Le Calvez

Institut de Math\'ematiques de Jussieu (UMR 7586 CNRS), UPMC 175
rue du Chevaleret 75013 Paris, France.

E-mail: lecalvez@math.jussieu.fr

\medskip
\medskip

Francisco R. Ruiz del Portal

Departamento de Geometr\'{\i}a y Topolog\'{\i}a, Facultad de
CC.Mate\-m\'{a}\-ti\-cas, Universidad Complutense de Madrid,
Madrid 28040, Spain.

E-mail: R\(_{-}\)Portal@mat.ucm.es

\medskip
\medskip

Jos\'e Manuel Salazar.

Departamento de Matem\'aticas. Universidad de Alcal\'a.  Alcal\'a
de Henares. Madrid 28871, Spain.

E-mail: josem.salazar@uah.es

\end{document}